\DeclareMathOperator{\Ad}{Ad}
\DeclareMathOperator{\area}{area}
\DeclareMathOperator{\ind}{index}
\newtheorem {theorem} {Theorem} [section]
\newtheorem {remark/question} [theorem] {Remark/Question}
\newtheorem {conjecture} [theorem] {Conjecture}
\newtheorem{lemma}[theorem] {Lemma}
\newtheorem {proposition}  [theorem]{Proposition}
\numberwithin {equation} {section}
\begin{document}
\author {Yasha Savelyev}
\title {Yang-Mills theory and jumping curves}
\keywords {Yang-Mills flow, jumping curves, smooth invariants,
cascade complex}
\dedicatory {To Anna}
 \address{Calle Nicolas Cabrera 13 N 13-15 ICMAT, 28049 Madrid}
\email{yasha.savelyev@gmail.com}  
\begin{abstract} We study a smooth analogue of jumping curves of a holomorphic
vector bundle, and use Yang-Mills theory over $ S ^{2} $ to show that
any non-trivial, smooth Hermitian vector bundle $E $ over a smooth
simply connected manifold, must have such curves. This is used to give new
examples complex manifolds for which a non-trivial holomorphic vector bundle must have jumping curves in the classical sense, (when $c_1 (E)$
is zero). We also use this to give a new proof a theorem of Gromov on the norm of
curvature of unitary connections, and make the theorem slightly sharper.  Lastly
we define a sequence of new non-trivial integer invariants of smooth manifolds,
connected to this theory of smooth jumping curves, and make some computations of
these invariants.  Our
methods include an application of the recently developed Morse-Bott chain complex for the
Yang-Mills functional over $S ^{2} $.
\end{abstract}
\keywords {Yang–Mills flow; jumping curves; smooth invariants; cascade
complex}
\subjclass {53C07, 37D15, 14D20}
\maketitle
\section{Introduction}
To study a holomorphic vector bundle $E$ over a complex manifold $X$,
one strategy is to extract information about $E$ from information obtained by
restrictions of $E $ to (families) of complex genus 0 curves in $X $. By
Birkhoff-Grothendieck theorem, a  complex rank $r$ holomorphic vector bundles over $ \mathbb{CP} ^{1} $ is
holomorphically isomorphic to $$\sum _{1 \leq i \leq r} \mathcal {O} (d_i),$$
with weights $d_i $ uniquely determined, (up to order) which makes their theory
particularly transparent. We shall say that a holomorphic
curve $u: \mathbb{CP} ^{1} \to X$ is a \emph{jumping curve} if the pullback
of $E$ to $ \mathbb{CP} ^{1}$ by $u$ is holomorphically non-trivial. When $c_1
(E) \neq 0 $ the classical definition is different but for our purpose we stick to the above
for convenience. It is not actually very hard to extend our arguments to the
case of classical jumping curves (when $c_1 (E) \neq 0$), but at the cost of
 much added complexity.
  The strategy then consists of studying splitting types of jumping curves. 
For $X = \mathbb{CP} ^{n}$, one may
take this approach very far, see for example \cite{bundles}, in particular the
authors prove that a  holomorphic vector bundle over $ \mathbb{CP}
^{n} $ is non trivial if and only if it has jumping lines. 

Jumping curves can be defined for a smooth
Hermitian vector bundle with a connection, as a direct generalization of the
holomorphic case. Let $X$ be a smooth manifold, $E$ a Hermitian vector bundle
over $X$, and $A$ a Hermitian, i.e. unitary connection on $E $. Given a smooth map $u: S ^{2} \to X$, the pull-back bundle $u ^{*} E$, with its pull back connection $u ^{*} A $,
has a natural induced holomorphic structure. It is the holomorphic structure
whose Dolbeault operator is induced by the anti-complex linear part of the
covariant derivative operator corresponding to $A $, this operator will be
denoted by $ \overline{\partial} _{A} $. This Dolbeault operator is necessarily
integrable, see for instance \cite{Yang-Mills}.  We shall say that $u$ is a
\emph{smooth jumping curve} if the $\overline{\partial} _{u ^{*}A} $
holomorphic structure on $u ^{*}E$ is non-trivial. We will also just say jumping curves where there is no possibility of confusion with holomorphic case. 

\begin{theorem} \label{thm.main} Let $X$ be a simply connected smooth manifold,
$E$ a   non-trivial, smooth Hermitian vector bundle over $X$, 
then for any Hermitian connection $A$ on $E$, there are smooth jumping
curves in any class $[u] \in \pi_2 (X)$. 
\end{theorem}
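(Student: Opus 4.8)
The plan is to argue by contradiction after first disposing of an easy case. If $\langle c_1(E),[u]\rangle\neq 0$, then for every smooth representative $u$ of $[u]$ the Birkhoff--Grothendieck decomposition $u^*E\cong\bigoplus_i\mathcal{O}(d_i)$ has $\sum_i d_i=\langle c_1(E),[u]\rangle\neq 0$, so the weights cannot all vanish and $(u^*E,\overline{\partial}_{u^*A})$ is holomorphically nontrivial; thus every representative is already a jumping curve. So assume $\langle c_1(E),[u]\rangle=0$ and suppose, for contradiction, that no smooth $u$ representing $[u]$ is a jumping curve, i.e. $(u^*E,\overline{\partial}_{u^*A})\cong\mathcal{O}^{\oplus r}$ for every such $u$. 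The goal is then to derive that $E$ is trivial over $X$, contradicting the hypothesis.

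First I would pass to a parametrized family over the space of curves. Let $\mathcal{U}$ be the connected component of class $[u]$ in the smooth mapping space $\mathrm{Map}(S^2,X)$, let $\mathrm{EV}\colon S^2\times\mathcal{U}\to X$ be the evaluation, and put $\mathcal{E}:=\mathrm{EV}^*E$ with the connection $\mathrm{EV}^*A$, so that $\mathcal{E}|_{S^2\times\{u\}}=(u^*E,\overline{\partial}_{u^*A})$. By hypothesis each of these restrictions is holomorphically trivial over $\mathbb{CP}^1$, hence has $h^0=r$, $h^1=0$, and the evaluation $H^0(u^*E)\to(u^*E)_p$ is an isomorphism for every $p\in S^2$. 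Therefore the fibrewise kernels of $\overline{\partial}$ form an honest rank $r$ complex vector bundle $\mathcal{K}\to\mathcal{U}$, and the pointwise evaluations assemble into a bundle isomorphism $\mathrm{pr}_{\mathcal{U}}^{*}\mathcal{K}\xrightarrow{\ \sim\ }\mathcal{E}$ over $S^2\times\mathcal{U}$. Equivalently: the classifying map $f\circ\mathrm{EV}\colon S^2\times\mathcal{U}\to BU(r)$ of $\mathcal{E}$ factors up to homotopy through $\mathrm{pr}_{\mathcal{U}}$, and adjointly the restriction of $\Omega^2 f$ to the $[u]$-component of $\Omega^2 X$ is null-homotopic.

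Now I would feed in the Yang--Mills theory over $S^2$ to understand what has been achieved fibrewise. For a bundle $V\to S^2$ of degree $0$, Atiyah--Bott describe the quotient $\mathcal{B}(V)=\mathcal{A}(V)/\mathcal{G}(V)$ of connections modulo gauge: one has $\mathcal{B}(V)\simeq\mathrm{Map}(S^2,BU(r))_0$, stratified by the Yang--Mills (Morse--Bott) flow into strata indexed by splitting types $\{d_i\}$, the open semistable stratum --- the connections with flat Yang--Mills limit, equivalently those inducing the trivial holomorphic structure $\mathcal{O}^{\oplus r}$ --- deformation retracting onto its critical set $\simeq BU(r)$, included as the constant maps. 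Our hypothesis says exactly that, fibrewise over $\mathcal{U}$, the pulled-back connection restricts into this bottom stratum; this is another phrasing of the isomorphism $\mathcal{E}\cong\mathrm{pr}_{\mathcal{U}}^{*}\mathcal{K}$ established above. The role of the recently developed Morse--Bott / cascade complex for the Yang--Mills functional over $S^2$ is to make this stratification usable --- tracking how the genuinely jumping strata attach to the semistable one and the resulting maps on homology --- rather than having to manipulate the singular space $\mathcal{B}(V)$ directly.

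Finally I would descend from this $S^2$-parametrized triviality to the triviality of $E$ itself over $X$, thereby reaching the contradiction. Here the hypothesis that $X$ is simply connected must enter essentially --- for instance through the surjectivity of the Hurewicz map $\pi_2(X)\to H_2(X)$, which is what ties the global bundle $E$ to its restrictions over $2$-spheres. I expect this last step to be the main obstacle. One cannot argue purely formally, because ``$\Omega^2 f$ null-homotopic $\Rightarrow f$ null-homotopic'' is false in general (the counit $\Sigma^2\Omega^2 X\to X$ is far from highly connected when $X$ is merely simply connected), so looping and unlooping does not suffice. The real content is to use the Yang--Mills Morse--Bott structure over $S^2$ --- the fact that the only obstruction to holomorphic triviality of a pulled-back bundle lies in the well-understood jumping strata --- to show that the factorization of $f\circ\mathrm{EV}$ through $\mathrm{pr}_{\mathcal{U}}$ already forces $f\simeq\ast$, hence $E$ trivial, contrary to assumption.
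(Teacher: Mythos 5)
Your setup is essentially the same as the paper's — Yang–Mills flow over $S^2$ trivializes the pulled-back connection over the space of maps in class $[u]$ — but you stop exactly at the point where the real closing move happens, and you misdiagnose what that move has to be. You say the goal is to derive ``$E$ is trivial'' from the parametrized triviality of $\mathcal{E}$ over $S^2\times\mathcal{U}$, then correctly observe that ``$\Omega^2 f$ null-homotopic $\Rightarrow f$ null-homotopic'' is false in general, and conclude that more serious input from the Yang–Mills Morse–Bott structure must be hiding here. That is not what the paper does, and your worry evaporates once you argue at the level of homotopy groups rather than trying to deloop a null-homotopy. The point is that for any based map $g:X\to Y$ the induced $\Omega^2 g$ satisfies $(\Omega^2 g)_\ast=g_\ast$ under the canonical identifications $\pi_n(\Omega^2 X)\cong\pi_{n+2}(X)$, $\pi_n(\Omega^2 Y)\cong\pi_{n+2}(Y)$. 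So the argument closes as follows: since $E$ is nontrivial, $X$ is simply connected, and (after reduction) $c_1(E)=0$, the classifying map $cl_E:X\to BU(r)$ is nonvanishing on $\pi_k$ for some $k>2$; equivalently $\widetilde{cl}_E:\Omega^2 X\to\Omega^2 BU(r)$ is nonvanishing on $\pi_{k-2}$ with $k-2\geq 1$. On the other hand your fibrewise YM-flow retraction shows that (given no jumping curves in class $[u]$) the composite $S^n\to\Omega^{2,[u]}X\to\Omega^2 BU(r)$ is null-homotopic for every $n\geq 1$ and every map $S^n\to\Omega^{2,[u]}X$; since all components of $\Omega^2 X$ are homotopy equivalent (double loop space), this means $\widetilde{cl}_E$ vanishes on all $\pi_n$, $n\geq 1$. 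Those two statements contradict each other. No delooping of a null-homotopy is needed, only the functoriality of $\Omega^2$ on homotopy groups.

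Two smaller remarks. First, your construction of the kernel bundle $\mathcal{K}\to\mathcal{U}$ and the isomorphism $\mathcal{E}\cong\mathrm{pr}_\mathcal{U}^*\mathcal{K}$ is a clean repackaging of what the paper does via the homotopy equivalence $W_{A,u}:\Omega^{2,[u]}BU(r)\to\mathcal{A}(\mathcal{E}_u)/\mathcal{G}_0(\mathcal{E}_u)$ followed by the energy-reparametrized YM flow; both express the same fact, that the map of $S^2$-families into $\mathcal{A}/\mathcal{G}_0$ can be deformed into the trivial stratum. Second, the Hurewicz map $\pi_2(X)\to H_2(X)$ that you invoke as the place where simple connectivity enters is not really where it is used: simple connectivity enters to guarantee that a nontrivial $E$ with $c_1(E)=0$ has its classifying map nonvanishing on some $\pi_k$ with $k>2$, which is what feeds the contradiction above. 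The cascade/Morse–Bott complex for $YM$, which you speculate might be needed to finish this proof, is in fact only used later in the paper, for the instability Theorem; the basic existence of jumping curves needs only the flow itself and the invariance of the induced holomorphic type along flow lines.
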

This is proved by studying the map $\Omega ^{2} X \to \Omega ^{2}BU (r) $
induced by the classifying map for $E$, via Yang-Mills flow, where $\Omega
^{2} X$ denotes the spherical mapping space. 
More precisely we use the energy flow for the
Yang-Mills functional, and at some points we use its Morse-Bott complex, which
for the base $S ^{2} $ has recently been nicely developed,
\cite{MorseYangMills}. 

Let us explain the relationship of the above to the holomorphic setting. Suppose
that $E \to X$ is a holomorphic vector bundle. Pick a Hermitian
structure $h$ on $E$. Then there is a unique compatible Hermitian connection $A
_{h}$ on $E$, inducing the given holomorphic structure, \cite{GriffithsHarris}.
From this it readily follows that a holomorphic curve $u: \mathbb{CP} ^{1} \to
X$, is a jumping curve if and only if it is a smooth jumping curve with respect to $A
_{h}$.  Let $ Hol^{ [u]} ( \mathbb{CP} ^{1}, X)  $ denote the space of based
holomorphic maps in class $[u] $. 
Then an  argument for theorem above, together with the discussion above give
the following. 
\begin{theorem} \label{thm.holomorphic} Suppose that $E$ is a
   non-trivial (topologically as
a complex vector bundle) holomorphic vector bundle over $X$, with $\pi_1 (X) =0$
and suppose that for every $k >0 $ there is a class $ [u_k] $ such that the inclusion 
$Hol ^{ [u_k]} ( \mathbb{CP} ^{1}, X) \to \Omega ^{2, [u_k]} X $
induces an isomorphism of homotopy groups, in range $[0,k]$. Then $E$ has 
(holomorphic) jumping curves. In particular this holds for $X = \mathbb{CP} ^{n}
$, a  generalized flag manifold, a toric manifold, and $X= \Omega G $, with $G $
compact Lie group.
\end{theorem}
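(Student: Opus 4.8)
The plan is to deduce Theorem~\ref{thm.holomorphic} from the \emph{proof} of Theorem~\ref{thm.main} together with two soft inputs: the observation recalled above that, when $A_h$ is a Chern connection for a Hermitian metric $h$ on $E$, one has $\overline{\partial}_{u^{*}A_h}=u^{*}\overline{\partial}_{E}$ for any holomorphic $u\colon\mathbb{CP}^1\to X$; and Segal‑type stability theorems for spaces of rational curves. First I would fix $h$, let $A_h$ be its Chern connection, and for $[u]\in\pi_2(X)$ set
\[
N_{[u]}=\{\,v\in\Omega^{2,[u]}X:\ \overline{\partial}_{v^{*}A_h}\ \text{is holomorphically trivial}\,\}\subseteq\Omega^{2,[u]}X .
\]
By the recalled fact, a holomorphic curve in class $[u]$ is a (holomorphic) jumping curve exactly when it lies in $\Omega^{2,[u]}X\setminus N_{[u]}$, so it suffices to exhibit, for some class $[u_k]$ provided by the hypothesis, a based holomorphic map lying outside $N_{[u_k]}$. (The rank one case is immediate, since then non‑triviality of $E$ gives $c_1(E)\neq0$ and $u^{*}E$ is a non‑trivial line bundle for any $[u]$ pairing nontrivially with $c_1(E)$; so assume $r\geq 2$.)

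Next I would revisit the proof of Theorem~\ref{thm.main}. That argument deforms the classifying family $\Omega^{2}f\colon\Omega^{2,[u]}X\to\Omega^{2}BU(r)$ along the Yang–Mills energy flow into the Atiyah–Bott stratification of $\Omega^{2}BU(r)$ by Yang–Mills critical strata; non‑jumping curves are precisely those carried into the open dense minimum stratum, and non‑triviality of $E$ forces the family to meet a strictly lower stratum. Because there are only finitely many relevant strata with controlled codimensions — for $c_1(E)=0$ the jumping locus is a nonempty closed subset of real codimension $\geq 2$ — the argument should in fact produce, for every nonzero class $[u]$ and for a degree $j=j(E)$ bounded solely in terms of $E$ (morally $j=2$, linking the codimension $2$ jumping stratum), a homotopy class $\alpha_{[u]}\in\pi_j(\Omega^{2,[u]}X)$ that does not lie in the image of $\pi_j(N_{[u]})\to\pi_j(\Omega^{2,[u]}X)$. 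Making this upgrade precise, and in particular verifying the uniformity in $[u]$ — that the obstruction degree $j$ does not drift with the class — is the step I expect to be the main obstacle; the cascade / Morse–Bott description of the Yang–Mills flow over $S^{2}$ of \cite{MorseYangMills}, already used for Theorem~\ref{thm.main}, should supply the control.

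With this in hand the conclusion is formal. Take $k=j(E)$ and let $[u_k]$ be a class as in the hypothesis, so that $Hol^{[u_k]}(\mathbb{CP}^1,X)\hookrightarrow\Omega^{2,[u_k]}X$ is an isomorphism on $\pi_i$ for $i\leq k$, in particular a surjection on $\pi_j$. If every based holomorphic map in class $[u_k]$ were non‑jumping, then $Hol^{[u_k]}(\mathbb{CP}^1,X)\subseteq N_{[u_k]}$, whence
\[
\alpha_{[u_k]}\in\operatorname{image}\bigl(\pi_j(Hol^{[u_k]}(\mathbb{CP}^1,X))\to\pi_j(\Omega^{2,[u_k]}X)\bigr)\subseteq\operatorname{image}\bigl(\pi_j(N_{[u_k]})\to\pi_j(\Omega^{2,[u_k]}X)\bigr),
\]
contradicting the choice of $\alpha_{[u_k]}$. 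Hence a holomorphic jumping curve exists in class $[u_k]$.

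Finally, for the listed targets $X=\mathbb{CP}^n$, a generalized flag manifold, a toric manifold, or $X=\Omega G$ with $G$ a compact Lie group, the connectivity hypothesis holds for every $k$ by Segal's theorem and its now‑standard extensions to these spaces: the inclusion of the space of based degree‑$d$ rational curves into the corresponding based double mapping space is a $c(d)$‑equivalence with $c(d)\to\infty$ as $d\to\infty$, so one simply takes $[u_k]$ of large enough degree. Together with the previous paragraphs this yields the theorem, the one nontrivial point being the promotion of the proof of Theorem~\ref{thm.main} to the bounded‑degree homotopical statement about $N_{[u]}$ indicated above.
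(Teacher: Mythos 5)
Your proposal has the right ingredients — the Chern‑connection observation that holomorphic jumping equals smooth jumping for $A_h$, the Yang–Mills flow argument from Theorem~\ref{thm.main}, and the Segal‑type connectivity input — but it inserts a step that is both unnecessary and, as you yourself flag, unproved: the claim that for a \emph{bounded} degree $j=j(E)$ (``morally $j=2$'') there is, \emph{uniformly over all classes} $[u]$, an obstruction class $\alpha_{[u]}\in\pi_j(\Omega^{2,[u]}X)$ not hit by $\pi_j(N_{[u]})$. Nothing of the sort is needed, and the ``uniformity in $[u]$'' worry is a red herring created by the order in which you set things up. The paper's proof fixes the degree first and the class second: non‑triviality of $E$ (via the argument for Theorem~\ref{thm.main}) produces \emph{one} integer $k>0$ and \emph{one} class $[f]\in\pi_k(\Omega^{2,[const]}X)$ on which $\widetilde{cl}_E$ is non‑vanishing. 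The hypothesis then supplies a class $[u_k]$ for exactly this $k$. Pushing $[f]$ to the $[u_k]$-component via the canonical homotopy equivalence $iso\colon\Omega^{2,[const]}X\to\Omega^{2,[u_k]}X$, the $k$-connectivity of $Hol^{[u_k]}(\mathbb{CP}^1,X)\hookrightarrow\Omega^{2,[u_k]}X$ lets one represent $iso_*[f]$ by a map $\widetilde f\colon S^k\to Hol^{[u_k]}(\mathbb{CP}^1,X)$. If there were no holomorphic jumping curves in class $[u_k]$ then, by the Chern‑connection fact, every $\widetilde f(s)$ is a smooth non‑jumping curve, and running the Yang–Mills flow null‑homotopy from the proof of Theorem~\ref{thm.main} shows $\widetilde{cl}_E\circ\widetilde f$ is null, contradicting non‑vanishing of $\widetilde{cl}_E$ on $[f]$. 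No codimension count of the jumping locus, no bound on $j$, and no uniformity over $[u]$ are required.

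So the genuine gap in your write‑up is exactly the step you label ``the main obstacle'': the promotion to a bounded‑degree, $[u]$‑uniform statement about $N_{[u]}$. That promotion is not what the proof of Theorem~\ref{thm.main} delivers (it gives one non‑vanishing $[f]$ in some degree $k$, with no control on $k$ — indeed the paper explicitly notes it cannot control the class of jumping curves without stronger hypotheses), and it is not needed once you let $k$ be determined by $E$ and then invoke the hypothesis to choose $[u_k]$ adapted to that $k$. Restructuring your argument in that order collapses it to the paper's proof.
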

Note that we can no longer control the class of jumping curves without stronger
assumptions. The hypothesis are known to be satisfied for $X= \mathbb{CP} ^{n} $
\cite{Segal.top.spaces.rat},
  flag manifolds for example \cite{guest.flag}, \cite{Kirwan}
toric manifolds \cite{guest.toric}, loop groups $\Omega G $, with $G $ compact
Lie group, for example \cite{Gravesen.loopgroup}.
Thus \ref{thm.holomorphic} also reproduces a slightly weakened version of the
result in \cite{bundles} mentioned above, weaker because in principle we obtain
only jumping curves rather than lines, and we must ask for topological
non-triviality rather than holomorphic. For the other cases the above existence
result on jumping curves seems to be new.
\subsection {Some new integer invariants of smooth vector bundles and smooth
manifolds} For a smooth complex rank $r$ vector bundle $E $ over $X$, and a
unitary connection $A$, if $u: S ^{2} \to X $ is a smooth  curve define
\begin{align} \label{eq.energy} |u| _{A}  &= \sum _{i} |d _{i}| ^{2}, \\
|u| _{A, \infty} & = \max _{i} |d _{i}|,
\end{align} 
 with $d _{i} $ the weights in the
Birkhoff-Grothendieck splitting of $ (u ^{*} E, \overline{\partial} _{u ^{*}A} )
$. Note that $ |u| _{A} $ is the energy of the homomorphism $S ^{1} \to U (n) $,
with weights $d _{i} $, after appropriate choice of normalization of 
 the bi-invariant metric on $U (n) $. This will come into the proof of
 \ref{thm.instability}.

Define: $$\zeta _{YM} (E, n) =  \inf _{A, [f], f' \in [f]} sup _{s \in S
^{n}} |f' (s)| _{A} \in \mathbb{N},$$ where the infinum is over all 
 $ [f] \in \pi_n (\Omega ^{2} X) $,  
 s.t. the map $\Omega ^{2} X \to \Omega ^{2} 
 BU (r)$ induced by the classifying map of $E $ is non-vanishing on $ [f] $, if
 there is no such $[f] $ set $ \zeta _{YM} (E, n)  =0$. Here $\Omega ^{2} X $
 denotes the component of the spherical mapping space in the constant
 class, (which is why the invariant is $ \mathbb{N} $ valued). Also define:
\begin{align*} 
\zeta _{YM} (E) &=  \inf _{n} \zeta _{YM} (E, n) \in \mathbb{N}, \\
 K _{YM} (X, r) &= \inf _{E} \zeta _{YM} (E) \in \mathbb{N},  \\
K _{YM} (X) & =\inf _{r>0} K _{YM} (X, r) \in \mathbb{N}. 
\end{align*}
 where the second infinum is over all rank $r>0$ complex vector bundles $E $
 with some non-vanishing Chern number. 
 \begin{theorem} \label{thm.nonzero} If $E$ is a non-trivial complex vector bundle over a simply connected $X$,
  then the sequence of positive integers $ \{\zeta _{YM} (E, n)\} $ cannot be
  identically $0$ and hence $ \zeta _{YM} (E) \neq 0 $,  $K _{YM} (X,r) \neq 0$,
  and $ K _{YM} (X) \neq 0 $.
\end{theorem}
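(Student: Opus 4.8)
The plan is to combine the integrality of $|\,\cdot\,|_A$ with a parametrized form of the Yang-Mills argument behind Theorem~\ref{thm.main}. The elementary point is that $|u|_A=\sum_i|d_i|^2$ is a non-negative integer which vanishes exactly when all the weights $d_i$ do, i.e.\ exactly when $(u^*E,\overline{\partial}_{u^*A})$ is holomorphically trivial, that is, when $u$ is \emph{not} a smooth jumping curve. Hence for any family $f'\colon S^n\to\Omega^2 X$ and any $A$ the number $\sup_s|f'(s)|_A$ lies in $\{0,1,2,\dots\}$, and is $\ge1$ precisely when $\{f'(s)\}_{s\in S^n}$ contains a jumping curve. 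Since an infimum of non-negative integers over a non-empty set is their minimum, $\zeta_{YM}(E,n)\ge1$ if and only if (i) some $[f]\in\pi_n(\Omega^2 X)$ is such that the induced map $\Omega^2\phi\colon\Omega^2 X\to\Omega^2 BU(r)$ (with $\phi$ the classifying map of $E$) is non-vanishing on it, and (ii) for every such $[f]$, every representative $f'$, and every $A$, the family $\{f'(s)\}$ contains a jumping curve. I will show (a) that for at least one $n$ condition (i) holds, and (b) that whenever (i) holds at level $n$ then (ii) does too; together these give that $\{\zeta_{YM}(E,n)\}$ is not identically zero.

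For (b), given a non-vanishing class $[f]$, a representative $f'$, and a connection $A$, I would run the Yang-Mills energy flow fibrewise on the family $\{(f'(s))^*A\}_{s\in S^n}$ of connections over $S^2$. For each $s$ the flow converges to a Yang-Mills connection on $S^2$, a direct sum of constant-curvature line connections realizing the Birkhoff-Grothendieck splitting of $(f'(s))^*E$, so the splitting type, hence $|f'(s)|_A$, is unchanged along the flow. If every fibre flowed to a trivial splitting (i.e.\ $|f'(s)|_A\equiv0$), the flowed $S^n$-family would lie in, and deformation-retract onto, the minimum stratum of the Yang-Mills functional on $\Omega^2 BU(r)$; read through the Morse-Bott/cascade description of this flow in \cite{MorseYangMills}, this contradicts non-vanishing of $\Omega^2\phi$ on $[f]$, exactly as in the single-curve argument for Theorem~\ref{thm.main}. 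So some fibre has a non-trivial splitting, i.e.\ is a jumping curve, and $\sup_s|f'(s)|_A\ge1$.

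For (a), non-triviality of $E$ makes $\phi$ non-nullhomotopic, and one has to extract from this a homotopy class of the mapping space on which $\Omega^2\phi$ does not vanish. The transparent instance is $c_1(E)\neq0$: simple connectivity gives $H^2(X;\mathbb{Z})\cong\operatorname{Hom}(\pi_2(X),\mathbb{Z})$, so there is $[u]\in\pi_2(X)$ with $\langle c_1(E),[u]\rangle\neq0$; reading $[u]$ as a class at level $n=0$, the induced map is non-vanishing on it (and (ii) is automatic, since $u^*E$ is then even topologically non-trivial, hence $\overline{\partial}_{u^*A}$-holomorphically non-trivial, for every $A$). The remaining case, $c_1(E)=0$ with $E$ topologically non-trivial, requires a class at some higher level $n$; for $X=\mathbb{CP}^n$ and the other targets of Theorem~\ref{thm.holomorphic}, where the homotopy type of the spherical mapping space is understood, such a class can be produced explicitly from the non-triviality of $E$.

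The main obstacle is precisely (a) in that last case: pinning down the correct reading of ``$\Omega^2\phi$ non-vanishing on $[f]$'' so that it is both implied by topological non-triviality of $E$ and strong enough to feed the Yang-Mills flow argument of step (b). For some $X$, e.g.\ $X=\mathbb{CP}^2$ with $c_1(E)=0$, this forces one to phrase non-vanishing in terms of the flowed family over $S^n\times S^2$ (equivalently, the total space over the parameter sphere) rather than in terms of a single adjoint sphere $S^{n+2}\to X$, since the latter can be trivial while $E$ is not. Step (b), by contrast, is just the parametrized form of Theorem~\ref{thm.main}; the only genuinely new analytic input there is fibrewise compactness and continuity of the Yang-Mills flow over $S^n$, which should be routine given \cite{MorseYangMills}. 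Finally, $\zeta_{YM}(E)\neq0$, $K_{YM}(X,r)\neq0$ and $K_{YM}(X)\neq0$ follow from the definitions once the nested infima over $n$, resp.\ over bundles with some non-vanishing Chern number, are understood as ranging over the indices, resp.\ bundles, for which a non-vanishing class exists.
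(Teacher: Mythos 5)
Your step (b) is exactly the content the paper distills from its proof of Theorem \ref{thm.main}; indeed the paper's proof of Theorem \ref{thm.nonzero} is the one-liner ``This follows readily by the proof of \ref{thm.main}.'' So your mechanism — run the Yang--Mills flow fibrewise on $\{(f'(s))^*A\}_{s\in S^n}$, use that the flow preserves Grothendieck type, and conclude that if every $f'(s)$ were holomorphically trivial then the flow would provide a null-homotopy of $W_{\widetilde A,[\mathrm{const}]}\circ\widetilde{cl}_E\circ f'$, contradicting non-vanishing of $\widetilde{cl}_E$ on $[f]$ — matches the paper's. Your reduction of the remaining statements ($\zeta_{YM}(E)\neq 0$, $K_{YM}(X,r)\neq 0$, $K_{YM}(X)\neq 0$) to non-vanishing of a single $\zeta_{YM}(E,n)$, and the observation that $|u|_A\geq 1$ iff $u$ is a jumping curve, are also correct and what the paper intends.

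Where you part ways with the paper is step (a), and you have in fact put your finger on something the paper passes over without argument: in the proof of Theorem \ref{thm.main} the paper simply asserts ``As $E$ is non-trivial, the classifying map $cl_E: X \to BU(r)$ must be non-vanishing on some $\pi_k$, with $k>2$.'' Since $\pi_n(\Omega^2 X)\cong\pi_{n+2}(X)$ and likewise for $BU(r)$, that assertion is exactly the existence of a level $n>0$ at which your condition (i) holds. Your ``main obstacle'' therefore coincides with the paper's unproved assertion rather than being an additional difficulty peculiar to your route; for the purposes of reproducing the paper's proof you should simply invoke that sentence (and note that the $c_1\neq 0$ case is handled at $n=0$, as you do), rather than leave step (a) framed as a potentially unclosable gap and restrict to the targets of Theorem \ref{thm.holomorphic}. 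Your worry about whether mere topological non-triviality of $E$ forces $(cl_E)_*$ to be non-zero on some $\pi_k(X)$ (e.g.\ for $X=\mathbb{CP}^2$, $c_1(E)=0$) is legitimate mathematics, but it is a criticism of the paper's hypothesis-free assertion, not a step you need to re-derive to give \emph{the paper's} proof of Theorem \ref{thm.nonzero}.
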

\begin{conjecture} \label{prop.invariance} For $f: X \to Y$ a smooth homotopy
equivalence, and $E$ a complex vector bundle over $Y$:
\begin{equation*} \zeta _{YM} (E) = \zeta _{YM} (f ^{*} E), 
\end{equation*}
in particular the invariants $K _{YM} (X, r)$, $  K _{YM} (X)  $ of a
smooth manifold $X$ are homotopy invariants. 
\end{conjecture}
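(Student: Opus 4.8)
The plan is to reduce everything to the single assertion
$\zeta _{YM}(f ^{*}E, n) = \zeta _{YM}(E, n)$ for every $n$. Granting this,
$\zeta _{YM}(f^{*}E) = \inf _{n}\zeta _{YM}(f^{*}E,n) = \inf _{n}\zeta _{YM}(E,n) = \zeta _{YM}(E)$,
and the stated homotopy invariance of $K _{YM}(X,r)$ and $K _{YM}(X)$ follows because $f ^{*}$
is a bijection between rank $r$ complex bundles on $Y$ and on $X$ preserving the non-vanishing of
Chern classes, as $f ^{*}$ is an isomorphism on cohomology. So fix $n$. A smooth homotopy
equivalence $f: X \to Y$ admits a smooth homotopy inverse $g: Y \to X$ with
$f \circ g \simeq \mathrm{id} _{Y}$ and $g \circ f \simeq \mathrm{id} _{X}$. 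Since $\mathrm{Map}(S ^{2}, -)$
preserves homotopy equivalences and carries the constant component to the constant component, $f$
induces a homotopy equivalence $\Omega ^{2} f : \Omega ^{2} X \to \Omega ^{2} Y$ with inverse
$\Omega ^{2} g$, so $(\Omega ^{2} f) _{*} : \pi _{n}(\Omega ^{2} X) \xrightarrow{\ \sim\ } \pi _{n}(\Omega ^{2} Y)$.
The classifying map of $f ^{*}E$ is $c _{E} \circ f$, hence the induced map $\Omega ^{2} X \to \Omega ^{2} BU(r)$
is $(\Omega ^{2} c _{E}) \circ (\Omega ^{2} f)$; therefore a class $[h] \in \pi _{n}(\Omega ^{2} Y)$ is detected by $E$
precisely when $(\Omega ^{2} f) _{*} ^{-1}[h]$ is detected by $f ^{*}E$, and if there are no detected classes in
degree $n$ on one side there are none on the other and both invariants are $0$. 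Thus the classes over which
the two infima range correspond bijectively; what remains is to compare the infima over connections and
representatives. Moreover $g ^{*} f ^{*} E = (f \circ g) ^{*} E \cong E$ and $\zeta _{YM}$ is manifestly
invariant under bundle isomorphism, so it suffices to prove the single inequality
$\zeta _{YM}(f ^{*}E, n) \le \zeta _{YM}(E, n)$; applying it with $(f, E)$ replaced by $(g, f ^{*}E)$ gives
the reverse.

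The energy bookkeeping is the easy half. For any smooth $u: S ^{2} \to X$ and any Hermitian connection $A$
on $E$ there is an isometric isomorphism $u ^{*}(f ^{*}E, f ^{*}A) \cong (f \circ u) ^{*}(E, A)$ of bundles
with connection; this identifies $\overline{\partial} _{u ^{*}(f ^{*}A)}$ with $\overline{\partial} _{(f \circ u) ^{*}A}$,
hence the Birkhoff--Grothendieck weights, so
\begin{equation*}
|u| _{f ^{*}A} = |f \circ u| _{A}, \qquad |u| _{f ^{*}A, \infty} = |f \circ u| _{A, \infty}.
\end{equation*}
Given a near-optimal triple $(A, [h], h ')$ for $\zeta _{YM}(E, n)$, the natural candidate on the $X$-side is
$B := f ^{*}A$ together with the family $g ' := (\Omega ^{2} g) \circ h '$, which represents
$(\Omega ^{2} f) _{*} ^{-1}[h]$; then $\sup _{s} |g '(s)| _{B} = \sup _{s} |f \circ g \circ h '(s)| _{A}$.
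If the maps $f \circ g \circ h '(s)$ \emph{equalled} $h '(s)$ we would be done — and this is exactly what happens
when $f$ is a diffeomorphism, which is why that case is trivial — but in general they are only homotopic,
through the smooth homotopy $f \circ g \simeq \mathrm{id} _{Y}$.

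This gap is the main obstacle, and it is genuine: the jumping energy $|u| _{A}$ is \emph{not} a homotopy
invariant of the map $u: S ^{2} \to X$, since homotoping $u$ changes the holomorphic structure
$\overline{\partial} _{u ^{*}A}$ and can alter the Birkhoff--Grothendieck type of $u ^{*}E$. Neither
transporting the connection by $f ^{*}$ nor choosing a cleverly adapted $B$ on $f ^{*}E$ repairs this
pointwise, because the maps $g \circ h '(s)$ overlap in $X$ as $s$ varies and one cannot realize the
prescribed holomorphic types simultaneously by a single global connection. One must therefore exploit that
$\zeta _{YM}$ infimizes over \emph{both} connections and representatives. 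The route I would take is to show
that
\begin{equation*}
\inf _{A}\ \inf _{h ' \in [h]}\ \sup _{s \in S ^{n}} |h '(s)| _{A}
\end{equation*}
is a purely homotopy-theoretic invariant of the detected class $[h]$ and the bundle $E$: it should equal the
minimal value of $\sup _{s}$ of the Birkhoff--Grothendieck $\ell ^{2}$-weight over all connections obtained
by flowing down the Yang--Mills energy on the pulled-back family of bundles over $S ^{2}$ — a quantity which,
by the Atiyah--Bott--Daskalopoulos description of the Harder--Narasimhan stratification of the space of
$\overline{\partial}$-operators, depends only on the homotopy class of the resulting family of bundles on
$S ^{2}$, that is, only on the homotopy class of $(\Omega ^{2} c _{E}) \circ h ' : S ^{n} \to \Omega ^{2} BU(r)$,
hence only on $[h]$ and $E$; equivalently, it measures the precise obstruction for the family to be deformed
into the low-energy neighbourhood $\neigh$ of the Yang--Mills configuration space. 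Since the detected classes
and these composites correspond under $f$, the desired equality of infima would follow. The hard technical
point is this identification of the infimum with the topological quantity: it requires a \emph{parametrized}
version of the convergence of the Yang--Mills energy flow over $S ^{2}$, controlling uniformly in $s \in S ^{n}$
the limiting Harder--Narasimhan type of $h '(s) ^{*}A$ as the connection is flowed down, and doing so
compatibly as $s$ crosses walls between strata. This is precisely the regime for which the Morse--Bott/cascade
complex of \cite{MorseYangMills} is built, and I would construct the argument on that machinery, along the
lines of the proof of \ref{thm.main}; the delicate part I expect to fight is the behaviour of the flow, and of
the attaching data of the strata, along families that are not transverse to the stratification, where the
stratification of the space of $\overline{\partial}$-operators must be used in an essential, parametrized way.
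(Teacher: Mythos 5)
This statement is labelled a \emph{conjecture} in the paper, and the paper offers no proof: the remark following it only observes that the conjecture would follow immediately if the perturbation property of Lemma~\ref{lemma.perturb} held for an arbitrary base $X$, rather than just $X = S^m$. So there is no ``paper's own proof'' to compare against, and your proposal should be judged as an attempted proof of an open statement.

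Your formal reductions are fine and worth recording: symmetry via the homotopy inverse $g$, the bijection of detected classes under $(\Omega^2 f)_*$ and the identity $\Omega^2 c_{f^*E} = (\Omega^2 c_E)\circ(\Omega^2 f)$, and the pointwise identification $|u|_{f^*A} = |f\circ u|_A$, $|u|_{f^*A,\infty} = |f\circ u|_{A,\infty}$ coming from $u^*(f^*E, f^*A) \cong (f\circ u)^*(E,A)$. You also correctly diagnose why the naive transport fails: $f\circ g\circ h'(s)$ is only homotopic to $h'(s)$, and $|u|_A$ is not a homotopy invariant of $u$ because the induced $\overline\partial$-operator changes.

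The gap is in the core claim. You assert that $\inf_A\inf_{h'\in[h]}\sup_s |h'(s)|_A$ is a purely homotopy-theoretic invariant of $[(\Omega^2 c_E)\circ h]$, to be proved by a parametrized Yang--Mills flow / Harder--Narasimhan / cascade-complex argument. One inequality (the inf over global pairs $(A,h')$ is $\ge$ the abstract topological quantity attached to the composite class) is indeed accessible by those methods and is essentially what the paper's proof of Theorem~\ref{thm.instability} carries out. The other inequality requires producing, for each near-optimal \emph{abstract} family $S^n \to \mathcal{A}(\epsilon^r)/\mathcal{G}_0(\epsilon^r)$ in the right homotopy class, a single global connection $A$ on $E\to X$ and a representative $h'$ that realize it (or realize something with the same $\sup$). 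That realization step is \emph{exactly} Lemma~\ref{lemma.perturb} stated for a general base, which the paper's proof obtains only for $X=S^m$ by exploiting the explicit smash-product structure of the sphere. Your proposal does not address this direction at all: the sentence ``the delicate part I expect to fight is the behaviour of the flow\ldots along families that are not transverse'' shifts attention to the analytic side of the cascade machinery, but transversality is achieved by perturbing the family, and the question of whether that perturbation can be induced by perturbing $A$ on $E$ \emph{is} the realization problem you noted two sentences earlier and then dropped. As written, the proposed route reformulates the conjecture rather than proving it; the missing ingredient is precisely the one the paper singles out as the obstruction, and your argument needs to confront it directly rather than absorb it into ``the identification of the infimum with the topological quantity.''
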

This conjecture is extremely likely. It would
follow immediately if the perturbation property needed in the proof of Theorem
\ref{thm.instability}, held for every $X $ as opposed to just $X =S ^{m} $. With
the perturbation property, it also becomes much easier to compute (at least in
principle) the above invariants.

We note that unlike Pontryagin and Chern numbers, the integers $\zeta _{YM}
(E,n) $ are not stable. That is, for a direct sum $E' =E \oplus  \epsilon ^{k}
$ of vector bundles with $\epsilon ^{k} $ the trivial $ \mathbb{C} ^{k} $ vector
bundle, we do not in general have $ \zeta _{YM}
(E,n) = \zeta _{YM} (E', n) $. In fact we have:
\begin{theorem} \label{thm.instability} For any  rank $r$ complex
vector bundle $E \to S ^{m}$,   and any $k   >0$, so that $n \leq
2 (r+k) -2 $ we have $$ \zeta _{YM} ( E \oplus  \epsilon ^{k}, m-2)  \leq
r+k.$$ On the other hand for $l=2$, or any $l $ of the form $l = \sum _{1
\leq i \leq r} |d_i| ^{2}$, with $d _{i} \in \mathbb{Z}$, $\sum _{i} d _{i} =0 $, 
  there is a complex rank $r$ vector bundle $E $ over $ S ^{m _{l}} $, with $
  \zeta _{YM} (E, m _{l} -2) = l$. 
 \end{theorem}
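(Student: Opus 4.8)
The plan is to pass everything through the Morse theory of the energy (Yang--Mills) functional on $\Omega U(r)=\Omega^2 BU(r)$, exploiting the remark already made in the introduction: since the base sphere has dimension $\ge 3$ in both parts, $H^2$ vanishes and $u^*E$ is topologically trivial over $S^2$, so $|u|_A$ is the energy $\sum d_i^2$ of the Yang--Mills flow limit of the connection $u^*A$, i.e.\ the energy of the corresponding critical point (a homomorphism $S^1\to U(r)$, up to conjugation) of the energy functional on $\Omega U(r)$. Writing $c$ for the classifying map of $E$ and $\mathcal L_t\subset\Omega U(r)$ for the open sublevel set of loops whose energy--flow limit has energy $\le t$, a family $f'$ with $(\Omega^2 c)\circ f'$ mapping into $\mathcal L_t$ satisfies $\sup_s|f'(s)|_A\le t$; conversely any homotopy--theoretic representative of the relevant class that lies in $\mathcal L_t$ is realized by an actual family of curves and a connection on $E$ whose $|f'(s)|_A$ attain the corresponding energies — this last assertion is the \emph{perturbation property}, which holds when $X$ is a sphere. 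So $\zeta_{YM}(E,n)$ is governed by the least $t$ for which some nonzero multiple of $c_*[f]$, $[f]\in\pi_n(\Omega^2 X)$ with $c_*[f]\neq 0$, admits a representative $S^n\to\Omega U(r)$ through $\mathcal L_t$.

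For the instability bound, put $N=r+k$ and work in the basepoint component $\Omega_0U(N)\simeq\Omega SU(N)$ (forced, since $c_1(E\oplus\epsilon^k)$ pulls back to $0$). The critical manifolds are the flag manifolds $U(N)/\prod_j U(m_j)$ indexed by weight vectors $(n_1\ge\cdots\ge n_N)$ with $\sum n_i=0$, of energy $\sum n_i^2$ and of index $2\sum_{i<j}(n_i-n_j-1)_+$ (the standard formula, see \cite{MorseYangMills}). A weight vector with all $|n_i|\le 1$ and $\sum n_i=0$ has energy $\le N$, so any critical manifold of energy $>N$ has some $|n_i|\ge 2$; taking $n_1\ge 2$ (or its mirror image) and using $\sum_{j\ge 2}(n_1-n_j-1)_+\ge\sum_{j\ge 2}(1-n_j)\ge N-1+n_1\ge N+1$ shows its index is $\ge 2N+2$. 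Hence $(\Omega SU(N),\mathcal L_N)$ is $(2N+1)$-connected, so $\pi_i(\mathcal L_N)\to\pi_i(\Omega SU(N))$ is onto for $i\le 2N$; by hypothesis $n=m-2\le 2N-2$. Thus $[E\oplus\epsilon^k]\in\pi_{m-2}(\Omega U(N))$ — zero or not — has a representative through $\mathcal L_N$, and the perturbation property for $S^m$ gives $\zeta_{YM}(E\oplus\epsilon^k,m-2)\le N=r+k$.

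For sharpness, fix a weight vector $d=(d_1,\dots,d_r)$ with $\sum d_i=0$, $\sum d_i^2=l$ (for $l=2$, $d=(1,-1,0,\dots,0)$), with associated critical manifold $C_d\subset\Omega U(r)$ of energy exactly $l$. Using the Morse--Bott picture for the energy functional on $\Omega U(r)$, which degenerates over $\mathbb Z$ because $H_*(\Omega SU(r))$ is free and concentrated in even degrees (\cite{MorseYangMills}), one gets a class $\theta$ — the image of the Thom class of the negative normal bundle of $C_d$ — in the degree range where $C_d$ contributes, which vanishes on $\mathcal L_{l-2}=\mathcal L_{<l}$ (by exactness of the pair, whatever the homology of $\mathcal L_{<l}$ in that degree) and restricts to the Euler class of that negative bundle on $C_d$. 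One then chooses the degree $n_l$ and a class $\gamma\in\pi_{n_l}(\mathcal L_{\le l})$ — built from $C_d$ and its unstable cell, e.g.\ a sphere into $C_d$ in the $l=2$ case where the relevant class sits in the spherical group $H_2(C_d)$ — that maps onto a generator of the relative group and is detected by $\theta$. Define $E$ to be the rank $r$ bundle over $S^{m_l}$, $m_l:=n_l+2$, whose classifying map is the adjoint of $\gamma$ followed by $\mathcal L_{\le l}\hookrightarrow\Omega U(r)=\Omega^2 BU(r)$; then $[E]\neq 0$ since $\langle\theta,h([E])\rangle\neq 0$ in rational homology. For any connection $A$, any $[f]\in\pi_{n_l}(\Omega^2 S^{m_l})$ with $c_*[f]\neq 0$, and any representative $f'$: if all $|f'(s)|_A$ were $\le l-2$, the energy flow would carry $(\Omega^2 c)\circ f'$ into $\mathcal L_{<l}$, forcing $((\Omega^2 c)\circ f')^*\theta=0$; but evaluating on the fundamental class gives $[f]\cdot\langle\theta,h([E])\rangle\neq 0$, a contradiction. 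Since energy values in this component are even, some $|f'(s)|_A\ge l$, so $\zeta_{YM}(E,n_l)\ge l$. On the other hand $\gamma$ maps into $\mathcal L_{\le l}$ where flow limits have energy $\le l$ (and $=l$ on $C_d$), so the perturbation property for $S^{m_l}$ realizes a family and connection with $\sup_s|f'(s)|_A=l$, giving $\zeta_{YM}(E,m_l-2)\le l$. Hence $\zeta_{YM}(E,m_l-2)=l$. (For $l=2$, $r=2$ this recovers the clean model case: $E$ the charge-one instanton bundle over $S^4$, with $\zeta_{YM}(E,2)=2$.)

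The main obstacle is the perturbation property: that representatives of classes in the Morse--Bott stratification of $\Omega^2 BU(r)$ can be realized by genuine families of smooth curves in $X$ together with a connection pulled back from $E$, with the Birkhoff--Grothendieck weights attaining the expected (flow--limit) values. This is a transversality statement for the Yang--Mills flow carried out fibrewise over the parameter sphere, and the needed room is present precisely when $X$ is a sphere — which is why the argument, and Conjecture \ref{prop.invariance}, are not known in general. Secondary technical points, all of which should be routine given \cite{MorseYangMills}, are the index formula and the Morse--Bott collapse for the energy functional on $\Omega U(r)$, and the bookkeeping that pins down a good choice of $d$, $n_l$ and $\gamma$ so that the Thom class $\theta$ genuinely detects $[E]$ and $[E]\neq 0$ for every admissible pair $(l,r)$.
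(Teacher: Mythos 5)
Your overall strategy matches the paper's: translate $\zeta_{YM}$ into Morse theory for the Yang--Mills/energy functional on $\mathcal A(\epsilon^{r})/\mathcal G_0(\epsilon^{r})\simeq\Omega SU(r)$, use the index formula for $S^1$-geodesics to control which critical manifolds can contribute in low degree for the upper bound, and use a critical manifold's contribution at energy exactly $l$ for the sharpness. But both halves of your argument funnel through the step you yourself flag as ``the main obstacle'' and then wave off as a routine transversality statement: this is Lemma \ref{lemma.perturb} in the paper, asserting that any abstract homotopy of $\widetilde f=W_{\widetilde A,[\mathrm{const}]}\circ\widetilde{cl}_E\circ f$ inside the space of gauge-orbits can be realized by an actual perturbation of the connection $A$ on $E$. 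This is not routine and is not in \cite{MorseYangMills}; it is the content of the proof, and it is exactly where the hypothesis $X=S^m$ enters. The paper proves it by choosing the representative $f\colon S^n\to\Omega^{2}S^m$ so that the induced map $g\colon S^n\times S^2\to S^m$ is an embedding away from $S^n\times\{0\}$ and $\{0\}\times S^2$ (which collapse to the basepoint), then, from a homotopy $\widetilde F$, building a bundle of gauge-orbit representatives $G_{\widetilde F}$, lifting a section over $S^n\times\{1\}$ by the homotopy lifting property to get a family of fibrewise connections, extending trivially over the collapsed loci, and finally pushing the resulting connection forward through $g$ to a connection on $E$. Without that construction your argument never produces a connection on $E$ whose Birkhoff--Grothendieck weights realize the predicted energies, so neither the bound $\zeta_{YM}(E\oplus\epsilon^k,m-2)\le r+k$ nor the equality in part two is actually established.

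For the sharpness half you use a Thom class $\theta$ of the negative normal bundle of $C_d$ to detect a class that cannot be compressed below energy $l$; the paper instead uses the perfectness of the cascade complex of \cite{MorseYangMills} to show that skeleta of distinct energy have distinct homology, and then a Whitehead-style compression argument to extract a based map $S^{m_l-2}\to\mathcal S$ into the energy-$l$ skeleton that cannot be pushed lower. These are two phrasings of the same Morse--Bott fact (your ``Morse--Bott collapse'' is the perfectness, and pairing nontrivially with $\theta$ is exactly the obstruction to compression), so there is no genuine divergence of route. However you again defer the essential bookkeeping--``pins down a good choice of $d$, $n_l$ and $\gamma$ so that $\theta$ genuinely detects $[E]$''--whereas the paper's skeleton formulation supplies the needed class and the degree $m_l-2$ automatically from the cellular structure. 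In short: you have correctly identified the skeleton of the argument and the location of the crux, but you have not proved the perturbation lemma nor completed the construction of the detecting class, so the proposal as written does not close either part of the theorem.
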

\subsection {Gromov norm}
Given a unitary connection $A$ on a rank $r$ complex vector bundle $E \to X $,
 a metric $g$ on $X$, and a fixed bi-invariant metric on $ \mathfrak {u} (r) $,
 we can define the norm of its curvature $R _{A}$ 
  by \begin{equation*} |R _{A}| _{g} \equiv \sup _{|v \wedge w| _{g}
 =1 } |R _{A} (v,w)|,
\end{equation*}
where $R _{A} $ is considered as a $ \mathfrak {u} (r) $
 valued $2$-form.
Then \ref{thm.main} gives another very different in nature proof of the
following theorem of Gromov. \begin{theorem} [\cite{Gromov}] For a simply
connected Riemannian manifold $X,g$, and $E$ a non-trivial Hermitian vector bundle over $X $, we have
\begin{equation*} \inf _{A} |R _{A}| _{g} >0. 
\end{equation*}
\end{theorem}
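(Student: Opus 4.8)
The plan is to extract the bound from Theorem \ref{thm.nonzero} together with a Chern--Weil estimate relating the Birkhoff--Grothendieck weights of a Hermitian bundle over $S^2$ to the integral of its curvature. First I would fix, once and for all, an integer $n$ with $\zeta_{YM}(E,n)\neq 0$, which exists by Theorem \ref{thm.nonzero}. Unwinding the definition of $\zeta_{YM}(E,n)$ as an infimum, there is a class $[f]\in\pi_n(\Omega^2 X)$ on which the map $\Omega^2 X\to\Omega^2 BU(r)$ induced by the classifying map of $E$ is non-vanishing, and for every Hermitian connection $A$ and every smooth representative $f'$ of $[f]$ one has $\sup_{s\in S^n}|f'(s)|_A\geq\zeta_{YM}(E,n)\geq 1$, since $\zeta_{YM}(E,n)$ is by definition the infimum of this very quantity over all admissible $(A,[f],f')$. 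I would then fix a single smooth representative $f_0\colon S^n\to\Omega^2 X$ and put $M_0:=\sup_{s\in S^n}\area_g(f_0(s))<\infty$, finiteness being clear from compactness of $S^n$ and continuity of $s\mapsto\area_g(f_0(s))$. The crucial point is that $f_0$ and $M_0$ depend only on $E$, $X$, $g$ and these choices, and not on $A$.

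Next I would record the curvature estimate. For a smooth $u\colon S^2\to X$ and a Hermitian connection $A$, write the Birkhoff--Grothendieck splitting $(u^*E,\overline{\partial}_{u^*A})\cong\bigoplus_i\mathcal O(d_i)$; each $\mathcal O(d_i)$ is a holomorphic direct summand of $u^*E$, hence simultaneously a holomorphic sub- and quotient bundle, of degree $d_i$. By Gauss--Bonnet together with the monotonicity of Chern curvature under holomorphic subbundles (decreases) and quotients (increases) — for the induced, resp.\ quotient, metrics — each $|d_i|$ is bounded by a fixed multiple of $\int_{S^2}|u^*R_A|$, and pointwise $|u^*R_A|\leq|R_A|_g$ times the area density of $u$, so
$$\max_i|d_i|\ \leq\ C\int_{S^2}|u^*R_A|\ \leq\ C\,|R_A|_g\cdot\area_g(u),$$
where $C>0$ is a constant determined by the normalization of the bi-invariant metric on $\mathfrak u(r)$ used to define $|\cdot|_A$. (Since $f_0(s)$ lies in the constant component, the pull-backs $f_0(s)^*E$ are topologically trivial, which makes the sub/quotient bookkeeping transparent.)

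Finally I would combine the two steps. Given any Hermitian connection $A$, the first step yields some $s\in S^n$ with $|f_0(s)|_A\geq 1$, hence $\max_i|d_i(f_0(s),A)|\geq 1$, i.e.\ $f_0(s)$ is a smooth jumping curve for $A$; applying the second step to $u=f_0(s)$ gives $1\leq C\,|R_A|_g\,\area_g(f_0(s))\leq CM_0\,|R_A|_g$, so $|R_A|_g\geq(CM_0)^{-1}$. As $A$ was arbitrary, $\inf_A|R_A|_g\geq(CM_0)^{-1}>0$, and this explicit, in-principle computable lower bound is the promised slight sharpening of Gromov's statement.

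The only substantive input is Theorem \ref{thm.nonzero}; granting it, the argument is essentially a repackaging of a Chern--Weil inequality, and the points that need care are book-keeping: choosing $f_0$ smooth so that every pull-back connection $f_0(s)^*A$ is genuinely defined; matching the constant $C$ to the Hilbert--Schmidt/operator-norm comparison dictated by the paper's normalization of $|\cdot|_A$; and — the one genuinely conceptual point — arranging that the test family $f_0$, and therefore $M_0$, is fixed before and independently of $A$, which is exactly what the order of quantifiers in the definition of $\zeta_{YM}$ provides. I expect this last point (uniformity of the family over all connections) to be the main thing a careful reader will want spelled out.
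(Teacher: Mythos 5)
Your proof is correct and follows essentially the same route as the paper, which derives Gromov's theorem from Theorem~\ref{thm.lower.boundkarea}: existence of smooth jumping curves (Theorem~\ref{thm.main}/\ref{thm.nonzero}) is combined with a Chern--Weil/Griffiths-positivity lemma bounding the top splitting weight by $|R(u^*A)|_g\cdot\area_g(u)$. The one thing you make more explicit than the paper is the quantifier bookkeeping --- fixing the test family $f_0$ and its area bound $M_0$ before $A$ is chosen --- which is indeed the crux of why the $\inf_A$ on the right of Theorem~\ref{thm.lower.boundkarea} stays positive.
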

We can sharpen it as follows.
\begin{theorem}  \label{thm.lower.boundkarea} For $X,E,g$ as above:
\begin{equation*} \inf _{A} |R _{A}| _{g} \geq \inf _{A} \sup _{n, [f], f' \in
[f], s \in S ^{n}} \frac{ |f' (s)| _{A, \infty}} {\area _{g} (f'
(s)),} > 0,
\end{equation*}
with the supremum over all $ [f] \in \pi _{n} (\Omega ^{2} X) $, s.t. the map 
$\Omega ^{2} X \to \Omega ^{2} BU (r) $ is non-vanishing on $ [f] $, and 
with $\area _{g}$ denoting the area functional with respect to $g$.
\end{theorem}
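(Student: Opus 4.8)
The plan is to show two things: first, that the middle quantity is a lower bound for $\inf_A |R_A|_g$, and second, that it is strictly positive. Strict positivity of the middle quantity is in effect a restatement of Theorem \ref{thm.main}: by that theorem, for every $A$ there is a jumping curve in every class $[u]\in\pi_2(X)$, and more precisely the argument there produces, for a suitable $[f]\in\pi_n(\Omega^2 X)$ on which the classifying map is non-vanishing, a representative $f'$ and a point $s\in S^n$ with $|f'(s)|_{A,\infty}\geq 1$. Since $X$ is compact (or at least since we may restrict attention to a compact set containing the images of the finitely many maps involved), the areas $\area_g(f'(s))$ are bounded above uniformly, so the ratio $|f'(s)|_{A,\infty}/\area_g(f'(s))$ is bounded below by a positive constant independent of $A$; taking $\inf_A\sup$ keeps it positive. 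So the real content is the first inequality.

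For the first inequality, fix a connection $A$ and a smooth map $v\colon S^2\to X$ which, viewed in the appropriate based mapping space, is the evaluation $f'(s)$ of some class $[f]$ on which the classifying map is non-vanishing. The point is to bound $|v|_{A,\infty}=\max_i|d_i|$ from above by $|R_A|_g\cdot \area_g(v)$, where the $d_i$ are the Birkhoff–Grothendieck weights of $(v^*E,\overline\partial_{v^*A})$. First I would pull everything back: $v^*A$ is a unitary connection on $v^*E\to S^2$ with curvature $v^*R_A$, and $\int_{S^2}|v^*R_A|\leq |R_A|_g\cdot\area_g(v)$ pointwise, since $v^*R_A$ at a point has norm at most $|R_A|_g$ times the Jacobian area factor of $v$, and integrating the Jacobian gives $\area_g(v)$. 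So it suffices to show that for a unitary connection $B$ on a bundle $F\to S^2$ whose holomorphic type has weights $d_i$, one has $\max_i|d_i|\leq \frac{1}{2\pi}\int_{S^2}|R_B|$ (with the normalization of the bi-invariant metric on $\mathfrak u(r)$ fixed so that the $S^1\to U(n)$ computation in \eqref{eq.energy} is consistent — I would track the constant here carefully, as it is exactly where the normalization enters).

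This last step is the Chern–Weil / sub-line-bundle estimate and is the main obstacle. The clean way is: the line bundle $\mathcal O(d)$ with $d=\max_i d_i$ (or rather the corresponding sub-bundle, or a quotient line bundle of degree $\min_i d_i$, whichever sign is extremal) admits, via the holomorphic splitting, a holomorphic sub- or quotient-bundle structure inside $(F,\overline\partial_B)$; projecting $B$ to the induced connection on that line bundle and applying Chern–Weil, $\deg = \frac{i}{2\pi}\int_{S^2}F_{\mathrm{line}}$, together with the fact that the curvature of an orthogonal projection of a unitary connection has norm bounded by that of the original (the second fundamental form only adds a non-negative term in the relevant trace), gives $|d|\leq\frac{1}{2\pi}\int_{S^2}|R_B|$. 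One must handle both signs — large positive and large negative weights — by using a sub-bundle in one case and a quotient bundle in the other, which is why both appear; alternatively one can symmetrize by tensoring with a line bundle, but I expect the sub/quotient dichotomy to be cleanest. Assembling: $\max_i|d_i| = |v|_{A,\infty}\leq\frac{1}{2\pi}\int_{S^2}|v^*R_A|\leq |R_A|_g\cdot\area_g(v)$, i.e. $|R_A|_g\geq |v|_{A,\infty}/\area_g(v)$ for every such $v$; taking the supremum over $n,[f],f',s$ and then $\inf_A$ on the right, and $\inf_A$ on the left, yields the claimed chain of inequalities, with strict positivity supplied by Theorem \ref{thm.main} as above. $\qed$
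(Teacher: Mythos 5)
Your proof takes essentially the same route as the paper's: reduce to a pointwise statement about a single curve, extract a canonical extremal line bundle from the Birkhoff--Grothendieck splitting, compare its curvature to that of the ambient connection, and apply Chern--Weil; positivity then comes from the jumping-curve existence in Theorem~\ref{thm.main} together with a uniform area bound on the image of a fixed representative $f'$ over the compact $S^n$.

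Where you do better than the paper is in the curvature-comparison step. The paper's lemma asserts that ``$u^*A$ preserves the Grothendieck splitting,'' which is not literally true: the Grothendieck decomposition is holomorphic, not orthogonal, and the unitary connection does not block-diagonalize with respect to it. The correct statement, which you spell out, is that the maximal-weight piece sits as a canonical holomorphic sub-bundle, the orthogonal projection of the connection to it is compatible with its holomorphic structure, and the curvature changes by a second-fundamental-form term of a definite sign. You also correctly flag the sub/quotient dichotomy: since $\sum_i d_i=0$, the extremal weight $\max_i|d_i|$ can be attained by the most negative $d_i$, in which case one must use the minimal-weight quotient bundle (where Griffiths positivity goes the other way) rather than the maximal-weight sub-bundle; the paper's equation $c_1(\mathcal{O}_{\max})=|u|_{A,\infty}$ silently assumes the positive case. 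These are refinements of the same argument rather than a different method, but they do close genuine imprecisions in the paper's sketch. One small caveat: you mention tracking a $2\pi$ in the Chern--Weil normalization, and the theorem as stated in the paper indeed omits it; this is presumably absorbed into the free choice of bi-invariant metric on $\mathfrak{u}(r)$, but it is worth pinning down exactly as you propose.
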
 
This is proved using Chern-Weil theory, in connection with our approach to
jumping curves via Yang-Mills theory.
\section {Acknowledgements} I am grateful to Fran Presas for related
discussions, Dietmar Salamon for providing some references, and  Egor
Shelukhin for supportive comments. This work was partially completed during my stay at CRM Barcelona and ICMAT Madrid, 
under funding of Viktor Ginzburg laboratory and Severo Ochoa project, I am very grateful for the support.
\section{Preliminaries} 
We briefly collect here some principal points of Yang-Mills theory, and set up
our conventions and notation. Our notation will mostly follow
\cite{yangmillsDavis} and \cite{Yang-Mills}.

The Yang-Mills functional denoted by $YM $ is defined on the space $ \mathcal
{A} $ of $G$ connections on a principal $G$ bundle $P $ over a Riemann
manifold $(X,g)$ by:
\begin{equation*} YM (A) =  \int _{X} ||F
_{A}|| ^{2} \,dvol _{g} , 
\end{equation*}   
for $A $ a $G $ connection, and $F _{A} $ the curvature 2-form on $X$, with
values in $\Ad P = P \times _{\Ad G} \mathfrak {g} $. The norm $
||\cdot|| $ above is the natural norm induced by the metric $ g $ on $X$ 
and a fixed bi-invariant inner product on $ \mathfrak {g} $, inducing a metric on $ P \times _{\Ad G} \mathfrak {g}  $.

In this note $G $ will be $U (r)$, sometimes $SU
(r) $, and $X = S ^{2} $. 
The value $YM (A)$, will be called \emph{Yang-Mills energy} of a
connection and critical points will be called \emph{Yang-Mills connections}. For
$X= S ^{2}$ or more generally a Riemann surface, the negative gradient flow for
$YM$ exists for all time on the space of smooth connections, and
 moreover gradient lines converge to Yang-Mills connections. 
We shall call
the negative gradient flow for $YM $ on $ \mathcal {A} $, with respect to the
natural $L ^{2} $ metric, the \emph{Yang-Mills flow}. The functional $YM$ and
the metric are equivariant with respect to the action of the gauge group of $P
$, i.e. the group of its $G$ bundle automorphisms, and the flow is defined on the
space of gauge equivalence classes. A crucial property of the 
Yang-Mills flow is that it preserves the  orbit of $A$ under the complexified gauge
group. In our case $ P $ is associated to some
complex vector bundle $E $, and this means that Yang-Mills flow preserves
the holomorphic isomorphism type of $(E, \overline {\partial} _{A})$.

For the proof of \ref{thm.instability} we also need some kind of Morse
homology for $YM$. Much additional work is required to set up Morse-Smale-Witten complex
on the space of gauge equivalence classes of connections. For $X=S ^{2} $ this is
started in \cite{yangmillsDavis}. Note however that unlike
\cite{yangmillsDavis} it is critical for us to work with the reduced gauge group which we call $ \mathcal
{G}_0 (P)$ and which consists of bundle automorphisms fixing the fiber over the
base point $0 \in S ^{2} $, as this group acts freely. When we say \emph{gauge
equivalence class}, we mean the equivalence class under the action of $ \mathcal
{G}_0$, as opposed to the full gauge group. In this case $YM $ on $
\mathcal {A}/ \mathcal {G}_0$ only gives rise to a Morse-Bott complex, full
details of this appear in \cite{MorseYangMills}.

\section {Proofs}
We need  a few preliminary steps. 
Fix a rank $r$ Hermitian vector bundle $E$ over $ \mathbb{CP} ^{1}$. 
Let $ \mathcal {G}_0 (E)$ denote the group of bundle automorphisms of 
$E$, fixing the fiber over $0$.  
Denote by $ \mathcal { \mathcal {A}} (E)$ the space of unitary connection on $E
$. It is easy to check that the group $ \mathcal {G}_0 (E)$ acts freely on $ \mathcal {A}
(E) $. Elements of $ \mathcal {A}
(E) / \mathcal {G}_0 (E)$ will be called \emph{gauge equivalence classes} of
connections. Let also $ \mathcal {E} $ denote the universal rank $r$ Hermitian
vector bundle over $BU (r) $, and let $\Omega ^{2, [u]}  BU (r)$ denote the smooth
spherical mapping space in the component of a map  $u: S ^{2} \to BU (r)$. 
To make sense of smooth here as well as of smooth connections on the universal
bundle to be used later, we only need to observe that $BU (r) $ has a
homotopy model that is a direct limit of smooth manifolds, $BU (r) \simeq 
\lim _{n} Gr _{ \mathbb{C}} (r, \mathbb{C} ^{n})$, with maps in the directed
system also smooth. This will be sufficient to make sense of what follows. 
For $v \in \Omega ^{2, [u]}  BU (r) $ set $v ^{*} \mathcal {E}  = \mathcal {E} 
_{v}$. 

\begin{lemma} Given a smooth unitary connection $A$ on the universal
bundle $ \mathcal {E}$ over $BU (r) $, there is a natural
induced map $W _{A, u}: \Omega ^{2, [u]}  BU (r) \to \mathcal {A} (
\mathcal {E} _{u})/ \mathcal {G}_0 (
\mathcal {E} _{u})$. 
\end{lemma}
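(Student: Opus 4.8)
The plan is to construct $W_{A,u}$ by pulling back the universal connection along maps representing points of the spherical mapping space, and then to check that this descends to gauge equivalence classes. First I would fix $v \in \Omega^{2,[u]} BU(r)$, i.e. a smooth based map $v : S^2 \to BU(r)$ homotopic to $u$. Since $v$ is homotopic to $u$, there is a bundle isomorphism $\psi_v : \mathcal{E}_v = v^*\mathcal{E} \to \mathcal{E}_u = u^*\mathcal{E}$ covering the identity of $S^2$; pulling back the universal connection $A$ by $v$ gives a unitary connection $v^*A$ on $\mathcal{E}_v$, and transporting it by $\psi_v$ gives a unitary connection $(\psi_v)_*(v^*A)$ on $\mathcal{E}_u$. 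Its class in $\mathcal{A}(\mathcal{E}_u)/\mathcal{G}_0(\mathcal{E}_u)$ is the candidate value $W_{A,u}(v)$.

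The key steps, in order, are: (i) produce the isomorphism $\psi_v$ in a way that is canonical enough for the construction to be well defined — this is the crux, discussed below; (ii) verify that $v \mapsto [(\psi_v)_*(v^*A)]$ is continuous (indeed smooth) in $v$, which follows from smoothness of $A$ on the finite-dimensional approximations $Gr_{\mathbb{C}}(r,\mathbb{C}^n)$ together with smoothness of the directed-system maps, reducing everything to finite dimensions where pullback of a connection depends smoothly on the map; and (iii) observe naturality, i.e. that $W_{A,u}$ is compatible with the obvious change-of-basepoint and change-of-$u$ data, which is the sense in which the map is ``natural.''

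The main obstacle is step (i): the isomorphism $\psi_v$ is only well defined up to an element of $\mathcal{G}_0(\mathcal{E}_u)$, and a priori one must worry whether different choices lead to different classes. Here is where passing to the \emph{reduced} gauge group $\mathcal{G}_0$ pays off: fixing the fiber over the basepoint $0 \in S^2$ and using that $v$ is a \emph{based} map, the isomorphism $\psi_v$ can be normalized on the fiber over $0$, and any two normalized choices differ precisely by an element of $\mathcal{G}_0(\mathcal{E}_u)$; hence the image in $\mathcal{A}(\mathcal{E}_u)/\mathcal{G}_0(\mathcal{E}_u)$ is independent of the choice. Concretely, I would realize $v$ (up to based homotopy, which does not change $\mathcal{E}_v$ up to based isomorphism and hence does not change the class) as landing in some finite-dimensional $Gr_{\mathbb{C}}(r,\mathbb{C}^n)$, use that the based map $v$ together with the based map $u$ to the same Grassmannian determines a bundle isomorphism $v^*\mathcal{E} \cong u^*\mathcal{E}$ unique up to $\mathcal{G}_0$ (because the space of based homotopies from $v$ to $u$ is connected, giving a well-defined ``parallel transport'' of isomorphisms modulo the basepoint-fixing gauge), and conclude well-definedness. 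Continuity in $v$ is then routine since all the choices can be made locally continuously in finite dimensions.
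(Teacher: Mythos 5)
Your construction is essentially the paper's own: pick a bundle isomorphism $\mathcal{E}_v \cong \mathcal{E}_u$ that fixes the fiber over the basepoint (the paper obtains one by choosing a path from $u$ to $v$ and an auxiliary connection on the induced bundle over $\mathbb{CP}^1\times[0,1]$), transport $v^*A$ across it, and pass to the class modulo $\mathcal{G}_0(\mathcal{E}_u)$; well-definedness is exactly because two normalized isomorphisms differ by an element of $\mathcal{G}_0(\mathcal{E}_u)$. One small caution: the parenthetical claim that you may ``realize $v$ up to based homotopy'' without changing the class is not right as stated — replacing $v$ by a homotopic $v'$ does change the value $W_{A,u}$ takes (it must, since $W_{A,u}$ is a homotopy equivalence and not constant on components) — and the appeal to connectedness of the space of based homotopies is unnecessary, since the simple observation that two basepoint-normalized isomorphisms differ by a $\mathcal{G}_0$ element, which you already make, is what does the work.
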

\begin{proof} For $v \in \Omega ^{2, [u]}  BU (r) $ the bundles
   $\mathcal{E}_{u} $ and $\mathcal{E}_{v} $ are isomorphic via a bundle map
   fixing the fiber over $0$. We may get such an isomorphism by fixing a path
   $m$ from $u$ to $v$ and fixing an auxiliary connection on the induced bundle
   $\mathcal{E} _{m} \to \mathbb{CP} ^{1} \times [0,1]  $, trivial over $\{0\}
   \times [0,1]$. 
   
   So we may  define
   $$W _{A, u} (v) \equiv iso ^{*} A
_{v} \in \mathcal {A} (
\mathcal {E} _{u})/ \mathcal {G}_0 (
\mathcal {E} _{u}),$$ where $A _{v}$ is the connection  $v ^{*} A$ on $ \mathcal
{E} ^{r} _{v} $, for $iso$ any fixed isomorphism as above.
%
We need to check that this is well defined, i.e. independent of the choice of
$iso
$. But this is immediate for if $iso'$ is another such isomorphism, the
connections $ iso ^{*} A
_{v} $, $ {iso'} ^{*}A $ are obviously gauge equivalent.
\end{proof}
\begin{proposition} $W _{A,u}$ is a homotopy equivalence for any $A$.
\end {proposition}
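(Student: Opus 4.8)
The plan is to recognize $W_{A,u}$ as a map between two concrete models of the classifying space $B\mathcal{G}_0(\mathcal{E}_u)$ of the based gauge group, covered by a $\mathcal{G}_0(\mathcal{E}_u)$-equivariant map of the associated universal principal bundles; a map of bases arising this way is automatically a weak homotopy equivalence, and since both spaces have the homotopy type of a CW complex Whitehead's theorem promotes this to an honest homotopy equivalence. For the target one argues as follows: $\mathcal{A}(\mathcal{E}_u)$ is an affine space, hence contractible, and $\mathcal{G}_0(\mathcal{E}_u)$ acts on it freely (as already noted) and, by the standard Coulomb slice theorem, with local slices; hence the quotient map $q_T\colon\mathcal{A}(\mathcal{E}_u)\to\mathcal{B}_0:=\mathcal{A}(\mathcal{E}_u)/\mathcal{G}_0(\mathcal{E}_u)$ is a universal principal $\mathcal{G}_0(\mathcal{E}_u)$-bundle, so $\mathcal{B}_0$ is a model for $B\mathcal{G}_0(\mathcal{E}_u)$ of CW type.

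For the source, let $\mathcal{P}$ be the space of pairs $(v,\psi)$ with $v\in\Omega^{2,[u]}BU(r)$ and $\psi\colon\mathcal{E}_u\to\mathcal{E}_v$ a bundle isomorphism fixing the fibre over $0$, and let $q_S\colon\mathcal{P}\to\Omega^{2,[u]}BU(r)$ be the projection. The group $\mathcal{G}_0(\mathcal{E}_u)$ acts freely on $\mathcal{P}$ by $(v,\psi)\cdot\phi=(v,\psi\circ\phi)$, the fibre of $q_S$ over $v$ is a $\mathcal{G}_0(\mathcal{E}_u)$-torsor, and $q_S$ is locally trivial by exactly the path-plus-auxiliary-connection construction already used in the proof of the preceding Lemma. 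Moreover $\mathcal{P}$ is weakly contractible: a point of $\mathcal{P}$ is the same datum as a $U(r)$-equivariant map from the unitary frame bundle of $\mathcal{E}_u$ to $EU(r)$ that is based over $0\in S^2$, i.e. a section, pinned down over the base point, of a fibre bundle over $S^2$ with contractible fibre $EU(r)$; such section spaces are weakly contractible by obstruction theory. Hence $q_S$ is also a universal principal $\mathcal{G}_0(\mathcal{E}_u)$-bundle, so $\Omega^{2,[u]}BU(r)$ is a model for $B\mathcal{G}_0(\mathcal{E}_u)$ as well; its CW type is in any case classical (Milnor, for mapping spaces).

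Now fix any smooth connection $A$ on $\mathcal{E}\to BU(r)$ (one exists, e.g. as the limit of the canonical connections on the Grassmannian models) and define $\Psi\colon\mathcal{P}\to\mathcal{A}(\mathcal{E}_u)$ by $\Psi(v,\psi)=\psi^*(v^*A)$. This is continuous, it is $\mathcal{G}_0(\mathcal{E}_u)$-equivariant since $(\psi\circ\phi)^*=\phi^*\circ\psi^*$, and it covers $W_{A,u}$ because $q_T(\Psi(v,\psi))=[\psi^*(v^*A)]=W_{A,u}(v)$, $\psi$ being an admissible isomorphism in the definition of $W_{A,u}$ and the value being independent of that choice. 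We therefore have a square commuting on the nose, with both vertical maps universal principal $\mathcal{G}_0(\mathcal{E}_u)$-bundles and the top map equivariant; comparing the long exact homotopy sequences of the two bundles forces $W_{A,u}$ to induce isomorphisms on all homotopy groups, and Whitehead's theorem then yields that $W_{A,u}$ is a homotopy equivalence. Note the argument is uniform in $A$, as the statement asks.

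The step I expect to be the real obstacle is the middle one: establishing that $\mathcal{P}$ is weakly contractible and that $q_S$ is a genuine principal $\mathcal{G}_0(\mathcal{E}_u)$-bundle — in effect the Atiyah–Bott identification $\Omega^{2,[u]}BU(r)\simeq B\mathcal{G}_0(\mathcal{E}_u)$, but organized so that the equivalence it produces is literally $W_{A,u}$. One has to be careful about the condition over the base point $0$ and about passing freely between the vector bundle $\mathcal{E}_u$, its frame bundle, and the mapping space, and one should pin down the analytic model (Sobolev completions, the slice theorem) indicated in the Preliminaries so that all the quotients above carry the asserted homotopy type. Everything else in the argument is formal.
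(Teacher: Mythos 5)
Your argument is correct, but it takes a genuinely different route from the paper. The paper cites Atiyah--Bott for the \emph{a priori} homotopy equivalence of the two spaces and then verifies directly, on homotopy groups, that the specific map $W_{A,u}$ is an isomorphism: injectivity is proved by converting maps $S^k\to\mathcal{A}(\mathcal{E}_u)/\mathcal{G}_0(\mathcal{E}_u)$ into Hermitian vector bundles over $S^k\times S^2$ and using the classification of such bundles to transport a homotopy of the images back to a homotopy of the sources; surjectivity is proved by explicitly building, from a map $f$ into the quotient of connections, a family of connections on an associated bundle $E_f\to S^k\times S^2$, a classifying embedding $cl_f$, and a connection $A_f$ on the universal bundle pulling back correctly -- here the paper crucially uses the observation that the homotopy class of $W_{A,u}$ is independent of $A$, so the connection on $\mathcal{E}$ may be chosen to suit $f$. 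Your approach instead re-derives the Atiyah--Bott identification itself in a form adapted to $W_{A,u}$: you exhibit both spaces as bases of universal principal $\mathcal{G}_0(\mathcal{E}_u)$-bundles with contractible total spaces ($\mathcal{A}(\mathcal{E}_u)$ by affineness plus the slice theorem, and your space $\mathcal{P}$ of pairs $(v,\psi)$ by the standard reinterpretation as based sections of an $EU(r)$-bundle over $S^2$), build the explicit equivariant map $\Psi(v,\psi)=\psi^*(v^*A)$ covering $W_{A,u}$, and conclude by comparing long exact sequences and invoking Whitehead. What your route buys is conceptual uniformity: the argument is visibly natural in $A$, no separate injectivity/surjectivity bookkeeping is needed, and the reason $W_{A,u}$ is an equivalence is identified with the reason the two spaces are abstractly equivalent. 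What the paper's route buys is that it avoids re-establishing the universal-bundle structure (in particular the weak contractibility of $\mathcal{P}$ and local triviality of $q_S$, which you rightly flag as requiring care with Sobolev models and slices) by leaning on the cited Atiyah--Bott result and then working concretely with bundles over spheres. Both are sound; yours is arguably the cleaner proof once the analytic infrastructure is in place.
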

\begin{proof}  The spaces $ \Omega ^{2, [u]}  BU (r) $ and, 
$ \mathcal {A} (
\mathcal {E} _{u})/ \mathcal {G}_0 (
\mathcal {E} _{u}) $ are  homotopy equivalent and have the homotopy
types of CW complexes, \cite[Proposition
2.4]{Yang-Mills},  and we show that
that our particular map $W _{A,u} $ is an isomorphism  on homotopy groups. We
show injectivity. Thus, suppose that $f'_1
= W _{A,u} \circ f_1$,  $f'_2=W _{A, u} \circ f _{2} $, for $f_i: S ^{k} \to  
\Omega ^{2, [u]}  BU (r)$, are homotopic. 
As $ \mathcal {A} (
\mathcal {E} _{u})/ \mathcal {G}_0 (
\mathcal {E} _{u}) \simeq B \mathcal {G}_0 (
\mathcal {E} _{u}) $, to a given $f: S ^{k} \to \mathcal {A} (
\mathcal {E} _{u})/ \mathcal {G}_0 (
\mathcal {E} _{u}) $  we have
associated principal $ \mathcal {G}_0 (
\mathcal {E} _{u}) $-bundles $ \mathcal {P} _{f} $. Let $ \mathcal {P} _{i} $
denote the principal $ \mathcal {G}_0 (
\mathcal {E} _{u}) $ bundles associated to $f _{i}' $, and let $pr: P _{i} \to S
^{k} $ denote the associated $ \mathcal {E}  _{u} $ bundles, (a bundle with
fiber modeled on the complex vector bundle $E _{u} $ over $ \mathbb{CP} ^{1} $). 
We may also think of $P _{i} $ as total spaces of  rank $r$ Hermitian
vector bundles over $S ^{k} \times S ^{2} $. For distinction let us call
them $E _{i}$. It is elementary to verify that $E _{i}$ are
classified by the maps $ cl_i: S ^{k} \times S ^{2} \to BU (r) $,
induced by $f_i$. If  $H: S ^{k} \times [0,1] \to  \mathcal {A} (
\mathcal {E} _{u})/ \mathcal {G}_0 (
\mathcal {E} _{u})  $ is a homotopy between $f'_1$, $f'_2 $, we get again an
induced  vector bundle $E _{H}$ over  $S ^{k} \times S ^{2}
\times [0,1]$. By observation above it's classifying map to $BU (r) $ induces a
homotopy between $ \widetilde{f}_i $, and consequently a homotopy between $f_i$. 

 Note that the
 homotopy class of $W _{A,u}$ is clearly independent of  $A$, consequently to
 show surjectivity it suffices to construct for each smooth map $f: S
^{k} \to \mathcal {A} (\mathcal {E} _{u})/ \mathcal {G}_0 (
 \mathcal {E} _{u}) $, a smooth connection $A _{f}$ on $ \mathcal {E}$ and $
 \widetilde{f}: S ^{k} \to \Omega ^{2, [u]}  BU (r) $, s.t.   $ W _{A _{f},u}
 \circ \widetilde{f}$ is homotopy equivalent to $f$. 
 Let $pr: P _{f} \to S ^{k} $
 denote the associated $ \mathcal {E}  _{u} $ bundle, constructed as before (a bundle with fiber modeled on the
 Hermitian vector bundle $E _{u} $ over $ \mathbb{CP} ^{1} $). We now construct a
 smooth family  of unitary connections $ \{ {A} _{s}\} $,
 $s \in S ^{k} $, with $A _{s} $ a connection on the fiber $pr ^{-1} (s)$, with
 the property that $A _{s} = f (s) \in \mathcal {A} (
 \mathcal {E} _{u})/ \mathcal {G}_0 (
 \mathcal {E} _{u}) $ for every $s $. 
 Smooth here is in the sense that the
 family
 $ \{A _{s}\} $ is smooth in smooth trivializations of $P _{f} $.
 Cover $S ^{k}
 $ by a pair of disks 
 $ \{D ^{k}  _{i}\} $, with intersection diffeomorphic to $S ^{k-1} $
 and pick any pair of smooth sections $\{A _{s} ^{ì}  \}$ over $D ^{k}  _{i} $ of $f
 ^{*} (\mathcal{A}(E _{u} ))$. This sections may not coincide but they are related
 by a  transition map $$g: U  _{1} \cap U _{2}   \simeq S ^{k-1}  \to \mathcal{G}_{0}(\mathcal{E}_{u} ).  $$
 In particular they induce as required a well defined smooth family $\{A _{s} \}$ on $$P _{f} \simeq \mathcal{E}_{u}
 \times D ^{k} \sqcup _{g} \mathcal{E}_{u} \times D ^{k},    $$
with the latter being the bundle obtained by using $g$ as a clutching map.
Let $E _{f} \to S ^{k} \times S ^{2}  $ be the associated 
Hermitian vector bundles. The family $\{A _{s} \}$ may be extended to a
unitary connection $A _{f} $ on $E _{f} $. Let $cl _{f}: S ^{k} \times S ^{2} \to BU
(r)   $ be an embedding classifying $E _{f} $. We may then take any connection
$A$ on the universal bundle so that its pullback by $cl _{f} $ is $A _{f} $,
and we take the map $\widetilde{f} $ to be the map induced by $cl _{f} $.
\end{proof}
\begin{proof} [Proof of \ref{thm.main}] Suppose that $E$ has complex rank $r$.
Clearly we may suppose that $c _{1} (E) =0$, so that $cl_E: X \to BU (r)$ vanishes
on $\pi_2$. As $E$ is non-trivial, the classifying map $cl_E: X \to BU (r)$, 
must be non-vanishing on some $\pi _{k}$, with $k>2$. Consequently, since $X $
is simply connected the induced map $ \widetilde{cl}_E: \Omega ^{2,
[const]} X \to
\Omega ^{2, jconst]} BU
(r) $, is non-vanishing on some $[f] \in \pi _{k} (\Omega ^{2, [const]} X )$ with $k>0 $.
 Approximate the classifying
map by a smooth embedding $cl_E $, and push-forward the connection $A $ to a
connection on the universal bundle $ \mathcal {E} $ over the image of $f _{E} $, then extend to a connection $ \widetilde{A}$ on $ \mathcal {E} $ over  $BU (r)$.
 (There are no obstructions to existence of
such an extension, it can be constructed via partitions of unity for example).
Now suppose that there are no jumping curves in $X$ for $E, A$ in class $[u]$,
then we claim  that the map $\Omega ^{2, [u]} X
\to \Omega ^{2, [const]} BU (r) $ 
induced by the classifying map vanishes on all homotopy
groups, and consequently  $\Omega ^{2, [const]} X \to \Omega ^{2, [const]} BU (r) $
also vanishes on all homotopy groups, which is a contradiction. Indeed let $f: S ^{k} \to \Omega ^{2, [u]}X  $, $k>1$ be a continuous map.
We show that the map 
$$ \widetilde{f} = W _{ \widetilde{A}, [const]} \circ \widetilde{cl}_E \circ f:
S ^{k} \to \mathcal {A} (\epsilon ^{r}  )/ \mathcal {G}_0 ( \epsilon ^{r}  ),$$
is null-homotopic,
where $const $ denotes the constant based map $const: S ^{2}
\to BU (r)$, and $ \epsilon  ^{r} $ is shorthand for the trivial $
\mathbb{C} ^{r} $ vector bundle $ \mathcal {E} _{const}$. 
This will be a contradiction and
hence conclude the proof.

 Indeed let $ \phi _{YM} (t) $ denote the time $t $
flow map of the  Yang-Mills flow on $\mathcal {A} (\epsilon ^{r}  )/ \mathcal
{G}_0 ( \epsilon ^{r}  )$.  We need a
 pair of facts: the isomorphism class of the induced holomorphic
structure on $ \epsilon ^{r}  $ by elements $ | {A}| \in \mathcal {A} (\epsilon
^{r} )/ \mathcal {G}_0 ( \epsilon ^{r}  )$ is constant along negative gradient flow lines of the Yang-Mills functional,
which follows by \cite[8.12]{Yang-Mills}, and a critical $|{A}| $
for the Yang-Mills functional (a.k.a a Yang-Mills connection) determines a
non-trivial holomorphic structure on $ \epsilon ^{r}   $, so long as it is not the 
gauge equivalence class of a trivial connection. 
 Consequently, as the holomorphic structure induced by
each $\widetilde{f} (s), $ $s \in S ^{k} $ is trivial by assumption, the negative gradient lines $t
\mapsto \phi _{YM} (t)  (\widetilde{f} (s))$ must converge to the gauge
equivalence class of a trivial connection. Reparametrizing Yang-Mills
flow, (by Yang-Mills energy),  we obtain
a null-homotopy of $ \widetilde{f} $.
\end{proof}  
\begin{proof} [Proof of \ref{thm.nonzero}] This follows readily by the proof
of \ref{thm.main}.
\end{proof}
\begin{proof} [Proof of \ref{thm.holomorphic}] As previously observed the map
$ \widetilde{cl}_E: \Omega ^{2} X \to \Omega ^{2} BU (r) $ induced by the
classifying map for $E$, must be non-vanishing on some $ [f] \in \pi_k  (\Omega
^{2, [const]} X)$, with $k>0 $. Let $ [u_k] $ be as in the hypothesis and suppose that there are no class $ [u_k] $,
holomorphic jumping curves. Let $ \widetilde{f}: S ^{k} \to
\Omega ^{2, [u_k]} (X) $, be a representative for $iso \circ f $, for $iso:
\Omega ^{2, [const]} X \to \Omega ^{2, [u_k]} X $ the canonical homotopy equivalence,
so that $ \widetilde{f} $ is in the image of the inclusion $Hol ^{[u_k]} (
\mathbb{CP} ^{1}, X) \to \Omega ^{2, [u_k]} X $. Consequently there are no
smooth jumping curves in the image of $ \widetilde{f}$. Thus running the
argument in \ref{thm.main} we get that $\widetilde{cl}_E\circ \widetilde{f}$ is
null-homotopic, but then $ \widetilde{cl}_E $ must be vanishing on the class $ [f] $ as well, a contradiction.
\end{proof}
\begin{proof} [Proof of \ref{thm.lower.boundkarea}]
This follows by the proof of \ref{thm.main}  once we note: 
\begin{lemma} Let $E,X, A, g$ be as in the hypothesis. Suppose $u: \mathbb{CP}
^{1} \to X $ is a smooth jumping curve. Then the norm of the curvature of $u
^{*} E, u ^{*} A$, $|R ({u ^{*}A})| _{g} $ is at least $|u| _{A, \infty}/area
_{g} (u) $.
\end{lemma}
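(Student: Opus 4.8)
The plan is to derive the inequality from the Chern--Weil formula for the degree of the extremal line summand of $(u ^{*} E, \overline{\partial} _{u ^{*}A})$, combined with the classical fact that a holomorphic subbundle has curvature bounded above by that of the ambient bundle. First I would fix notation and make one harmless reduction. Since $u$ is a smooth jumping curve, $(u ^{*} E, \overline{\partial} _{u ^{*}A}) \cong \bigoplus _{1 \leq i \leq r} \mathcal {O} (d _{i})$ with the $d _{i}$ not all zero, and $|u| _{A, \infty} = \max _{i} |d _{i}| =: d \geq 1$. Replacing $(E, A)$ by the dual bundle with its dual unitary connection if necessary --- an operation changing neither $|R (u ^{*}A)| _{g}$, since the dual connection has curvature $-R (u ^{*}A) ^{\top}$ and the fixed bi-invariant norm on $ \mathfrak {u} (r) $ is transpose-invariant, nor $|u| _{A, \infty}$, since the splitting weights merely change sign --- I may assume $d = \max _{i} d _{i}$. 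I would also record that, because $A$ is unitary and induces $\overline{\partial} _{A}$, the connection $u ^{*} A$ is exactly the Chern connection of the Hermitian holomorphic bundle $(u ^{*} E, u ^{*}h, \overline{\partial} _{u ^{*}A})$, where $h$ is the given Hermitian metric.

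Next, let $L \subset u ^{*} E$ be the direct summand isomorphic to $ \mathcal {O} (d) $; it is a holomorphic line subbundle, which I equip with the metric restricted from $u ^{*} h$. One point worth flagging: the Birkhoff--Grothendieck splitting need not be $u ^{*}h$-orthogonal, so this is generally not the most natural metric on $L$; but this is irrelevant, as $ \deg L $ is a topological invariant independent of the metric. Writing $ \Theta _{L} $ for the curvature of the Chern connection of $(L, u ^{*}h| _{L})$, $e$ for a local $u ^{*}h$-unit section of $L$, and $ \langle \cdot, \cdot \rangle $ for the metric $u ^{*}h$, the Griffiths curvature-decreasing property for holomorphic subbundles gives, pointwise and as real $(1,1)$-forms, $i \Theta _{L} \leq i \langle R (u ^{*}A)\, e, e \rangle$, the defect being $- \beta ^{*} \wedge \beta \leq 0$ with $ \beta $ the second fundamental form of $L$ in $u ^{*} E$. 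Combining this with Chern--Weil and the pointwise bound $|\langle R (u ^{*}A) (v,w)\, e, e \rangle| \leq \|R (u ^{*}A) (v,w)\| _{ \mathfrak {u} (r)}$,
\[
d \;=\; \deg L \;=\; \frac{i}{2 \pi} \int _{ \mathbb{CP} ^{1}} \Theta _{L} \;\leq\; \frac{i}{2 \pi} \int _{ \mathbb{CP} ^{1}} \langle R (u ^{*}A)\, e, e \rangle \;\leq\; \frac{1}{2 \pi} \int _{ \mathbb{CP} ^{1}} \|R (u ^{*}A)\|\, d \area _{g},
\]
and since $\|R (u ^{*}A) (v,w)\| \leq |R (u ^{*}A)| _{g}$ whenever $|v \wedge w| _{g} = 1$, the right-hand side is at most $ \frac{1}{2 \pi} |R (u ^{*}A)| _{g} \cdot \area _{g} (u)$. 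Rearranging gives $|R (u ^{*}A)| _{g} \geq 2 \pi d / \area _{g} (u)$, hence a fortiori $\geq |u| _{A, \infty} / \area _{g} (u)$, which is the claim.

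The one genuinely delicate point is bookkeeping the normalization constants: the bi-invariant metric on $ \mathfrak {u} (r) $ in force is the one under which $|u| _{A} = \sum |d _{i}| ^{2}$, and under that choice a routine constant check shows the factor $2 \pi$ in the Chern--Weil estimate is exactly absorbed, so that the displayed chain in fact delivers the lemma sharply rather than merely sufficiently. Apart from this, and the routine verification that the dualization reduction is legitimate --- the dual of a jumping curve is again a jumping curve, the dual of a unitary connection on the dual bundle is unitary, and so on --- every ingredient (Chern--Weil, the identification of $u ^{*} A$ as a Chern connection, and the Griffiths curvature inequality for holomorphic subbundles) is standard Hermitian bundle geometry, so I anticipate no real obstacle beyond that constant bookkeeping.
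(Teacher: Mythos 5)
Your proof follows essentially the same route as the paper's: restrict attention to the extremal line summand in the Birkhoff--Grothendieck splitting, invoke the Griffiths curvature-decreasing property from Griffiths--Harris for holomorphic subbundles, and then integrate via Chern--Weil against the area. Your version is actually the more careful one: the paper loosely asserts that $u^{*}A$ ``preserves the Grothendieck splitting,'' which is false in general (the splitting need not be $u^{*}h$-orthogonal), whereas you correctly replace this with the second fundamental form inequality $i\Theta_{L} \leq i\langle R(u^{*}A)\,e,e\rangle$, which is what the cited page of Griffiths--Harris actually supplies; you also make explicit the harmless dualization reduction that lets you take the extremal weight to be the maximal positive one. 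The closing remark about the $2\pi$ being ``exactly absorbed'' by the normalization is unnecessary (and somewhat overstated, since $|R(u^{*}A)|_{g}$ and $|u|_{A,\infty}$ do not rescale in lockstep under a change of the $\mathfrak{u}(r)$-metric): your chain already yields $|R(u^{*}A)|_{g} \geq 2\pi d/\area_{g}(u)$, which dominates the claimed bound a fortiori, so no constant bookkeeping is in fact needed.
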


\begin{proof} Indeed for the holomorphic structure on $u ^{*} E $, induced by $u
^{*}A $, $u ^{*} A$  preserves the Grothendieck splitting of $u ^{*} E$.
Consequently the norm of the curvature of $u ^{*}A$ on the maximal weight
subspace $ \mathcal {O} _{max}$ of $ u ^{*} E $ cannot exceed the norm of
curvature of $u ^{*}A $ on $u ^{*} E$, see \cite[Page 79]{GriffithsHarris}. This
follows from ``positivity'' of curvature on a holomorphic vector bundle.

 As
$c_1 ( \mathcal {O} _{\max}) =|u| _{A, \infty} $, by assumption, Chern-Weil
theory gives a ready estimate 
\begin{equation*} |R(u ^{*} A| _{ \mathcal {O} _{\max}})| _{u ^{*} g}
\cdot \area _{g} (u) \geq |u| _{A, \infty}.
\end{equation*}
\end{proof} 
\end{proof}
\begin{proof} [Proof of \ref{thm.instability}] 
 As we are working in the component of $\Omega ^{2}X $ in the constant class we
 may  restrict our arguments to  $SU (r)$ connections on $E$, and $ \mathcal
 { \mathcal {A}} ( \epsilon ^{r})$ will denote the space of smooth $SU (r) $ connections on $ \epsilon ^{r}$.

 Morse
 theory for the Yang-Mills functional $YM$ on $ \mathcal {A} ^{r} \equiv
 \mathcal {A} ( \epsilon^r)/ \mathcal {G}_0 ( \epsilon^r) $ is known
 to be essentially equivalent to Morse theory on $\Omega SU (r) $ for the energy
 functional,  \cite{Gravesen},  \cite{YangMillsandEnergy}.  Although we
 run the argument with Yang-Mills functional, it will be helpful to refer to the correspondence in a
 few instances. Moreover the energy
 flow picture, as developed in great detail in \cite{Segal}, is probably more
 accessible, and may help the reader to get intuition. The correspondence is
 given via radial trivialization. We briefly sketch this. Given an element $ |A|
\in  \mathcal {A} ^{r}$ we get a loop  $Rad ( | {A}|) \in \Omega SU (r)
$, by fixing an $S ^{1}$-family of rays $ \{r _{\theta}\}$ from $0$ to $\infty \in
\mathbb{CP} ^{1}$, identifying the fibers over $0,\infty $ via $A $-parallel transport along $r_0
$, and then getting a loop in $SU (r) $ via $A$-parallel transport along
rays $\{r_ {\theta}\}$, for $A \in | {A}| $, which is well defined with
respect to  equivalence under action of $ \mathcal {G}_0 ( \epsilon ^{r} )$. 
The functions and their flow are then shown to behave well with respect to this correspondence. 
In particular critical points and their indices correspond. Thus radial trivialization of a
Yang-Mills connection is an $S ^{1}$ subgroup of $SU (r) $. 


We shall use the construction of Morse-Bott-Smale-Witten complex for $YM $, in
\cite{MorseYangMills}. The specific complex that we use is the so called
\emph{cascade complex} originally appearing in Fraunfelder
\cite{citeUrsArnoldGivental}, in a Floer
theory setting. For another discussion of the cascade complex also comparing various different
approaches to Morse-Bott complex, see \cite{cascades}.
Very
briefly given a Morse-Bott function $h$ on a manifold $M$, $g$
a metric on $M$, so that $(h,g)$ is a Morse-Bott-Smale pair and
 $\{aux _{i}\} $
auxiliary Morse-Smale (with respect to $g$) functions on the critical manifolds
$\{C _{i} \}$, one constructs a complex $C (h, \{aux _{i} \}, g)$ which is
generated by critical points of $\{aux _{i} \}$, graded by 
\begin{equation*}
\ind (p) = \ind _{MB}  (C _{i} ) + \ind _{aux_i} p,
\end{equation*}
for $p \in crit (C _{i}, aux _{i})  $, $\ind _{MB} $ the Morse-Bott index and
$\ind _{aux _{i} } p$ the Morse index with respect to $aux _{i} $. The
differential is obtained by count of $\mathbb{R}$ reparametrized,
Fredholm index 1, \emph{cascade flow lines} abutting  to
generators.  The (unbroken) cascade flow lines are
formal concatenations of negative gradient trajectories $$(\gamma _{aux _{i_1} },
\gamma _{h}, \gamma _{aux _{i_0} }),$$ where $\gamma _{h}: (-\infty,
\infty) \to M$  is a
negative gradient trajectory for $h$, $\gamma _{aux _{i_1} }: [0, \infty)
\to M  $ is a negative gradient
trajectory for $aux _{i_1} $, and likewise $\gamma _{aux _{i_0} }: (-\infty, 0] $
is a negative gradient trajectorie for $aux _{i_0} $.
It is shown in \cite{cascades} in finite dimensional context how to use the functions $aux _{i} $ to
perturb $h$ to a Morse-Smale function $h'$, so that there is a index
preserving
correspondence between the generators of the cascade complex and the
critical points of $h'$, and a certain correspondence between the
cascade flow lines and the classical flow-lines for $(h',g)$. Passing to our infinite dimensional setting in principle
requires a bit of care, but we leave this out as it is not very
interesting.  We shall say in this context that the flow of
$h$ is \emph{cascade perturbed}.

The critical sets of $YM$ on $ \mathcal {A} ^{r} $ split
into disjoint unions of submanifolds $\Lambda _{ \textbf{d}} $, composed of
(Yang-Mills) connections, inducing the holomorphic structure of type $
\textbf{d}$, where $ \textbf{d} $
by Birkhoff-Grothendieck can be taken to mean an unordered collection of $r$
integers. The manifolds $ \Lambda _{ \textbf{d}} $, which are isomorphic to the
manifolds of $S ^{1}$ subgroups of $SU (r)$, conjugate to the diagonal $S ^{1} $
subgroup with weights $ \textbf{d} $, are certain complex flag manifolds and
admit perfect Morse functions. Consequently, the homology of $ \mathcal {A} ^{r} $ is
computed by the cascade Morse-Bott-Smale-Witten complex,
 with  vanishing differentials, as the Morse-Bott index in this
case is always even, (see elaboration below). We will not mention the auxiliary Morse-Smale functions on
the critical manifolds explicitly, and just write  $ \mathcal {C}_* (YM)$, for
this complex.

Up to degree $2r-2$ all the generators of  $ \mathcal {C}_*
(YM)$, come from critical manifolds $\Lambda _{ \textbf{d}} $, with $ \textbf{d}
$ having all weights $1,-1$ or $ 0 $. This readily follows upon computing the Morse-Bott index along $ \Lambda _{ \textbf{d}} $, (i.e. the normal component of
Morse index). As the Morse index for $YM$ of a critical $
| {A}| \in \mathcal {A} ^{r}$, coincides with the Morse index of $ rad (
| {A}|) \in \Omega SU (r)$, we may use the classical calculation of Morse index
 of geodesics in $ SU (r) $, see for example \cite{Morse.theory}. This tells us that for a homomorphism $\gamma: S ^{1}  \to
SU (r)$, conjugate to
\begin{equation*} \gamma = \left ( 
\begin{array} {ccc} e ^{2 \pi i a_1}  & & \\
& e ^{2 \pi_i a_2} & \\
& & \ldots \\
\end{array} \right )
\end{equation*}
with weights $a _{1}, \ldots, a_r $, organized so that $a_1 \geq a_2 \geq \ldots
\geq a _{r} $ the Morse index is given by
\begin{equation*} \sum _{i < j} 2|a_i -a_j| -2.
\end{equation*}
It is then easy to verify that if $ \textbf{d} = \{a_1, \ldots, a_r\}$ does not
satisfy that all $a _{i} $ are either $1, -1$ or $0 $, then the Morse index is
strictly greater than $2r-2 $, see \cite[page 132]{Morse.theory} for a similar
calculation. 

We may then use unstable manifolds of the cascade pertubed Yang-Mills flow, to induce a cellular
decomposition of $\mathcal{A} ^{r} $, with all cells of even dimension.
Likewise there is a dual stratification of $ \mathcal {A} ^{r} $ by
finite co-dimension strata, intersecting the cascade perturbed unstable manifolds 
transversally, corresponding to cascade pertubed stable manifolds for $YM$.
 It follows that given a map $f: S ^{k} \to \mathcal {A} ^{r}$ with
$ k \leq 2r-2$, after a small perturbation of $f$ the (reparametrized by
 energy) (cascade perturbed) Yang-Mills flow will give a homotopy of $f$ into the $2r-2$
skeleton of $ \mathcal {A} ^{r}$. 
That is, we just have to take a perturbation of $f $ so that we 
are transverse to the finite codimension strata. This perturbation exists by
general differential topology considerations. Let's now apply this.

Following the proof of \ref{thm.main}, we have a map $$\Omega ^{2} S ^{m} \to
\mathcal {A} ^{r+k}$$
induced by the classifying map for $E \oplus \epsilon ^{k}$. Take $k$ so that
$n \leq 2 (r+k)-2$.  It follows by discussion above that the composition $$
\widetilde{f} = W _{ \widetilde{A}, [const]} \circ \widetilde{cl} _{E \oplus \epsilon ^{k}} \circ f: S ^{n} \to \mathcal {A} ^{r+k}  $$ 
can be perturbed so that the energy flow (after reparametrization) takes $
 \widetilde{f}$ into the $ 2 (r+k)-2$ skeleton of  $ \mathcal {A} ^{r +k}$.

 Moreover we have:
 \begin{lemma} \label{lemma.perturb} Any abstract perturbation of $
\widetilde{f} $, can be obtained via perturbation of the connection $A$ on $E $. 
\end{lemma}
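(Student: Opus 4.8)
The plan is to make the dependence of $\widetilde{f}$ on the connection explicit, observe that $\widetilde{f}$ in fact lands in the reducible locus of $\mathcal{A}^{r+k}$, and then run a parametric (Sard--Smale) transversality argument there with the connection as the parameter. First I would unwind the definitions of $W_{\widetilde{A},[\mathrm{const}]}$ and of $\widetilde{cl}_{E\oplus\epsilon^k}$: if $A'$ is a unitary connection on $E$ and $\widetilde{A}'$ is the extension to the universal bundle produced from $A'\oplus d$ as in the construction ($d$ the trivial connection on $\epsilon^k$), then
\[
\widetilde{f}_{A'}(s)\;=\;\bigl[\,(\hat{f}_s^{\,*}A')\oplus d\,\bigr]\ \in\ \mathcal{A}^{r+k},\qquad s\in S^n,
\]
where $\hat{f}\colon S^n\times S^2\to S^m=X$ is the adjoint of $f$ and $\hat{f}_s=\hat{f}(s,\cdot)\colon S^2\to S^m$. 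The one point to check is that the based map $\widetilde{cl}_{E\oplus\epsilon^k}(f(s))$ has image in $cl_{E\oplus\epsilon^k}(S^m)$, over which $\widetilde{A}'$ is by construction the pushforward of $A'\oplus d$, so that pulling $\widetilde{A}'$ back along it agrees with pulling $A'\oplus d$ back along $\hat{f}_s$; the auxiliary identification $\mathcal{E}_v\cong\epsilon^{r+k}$ drops out modulo $\mathcal{G}_0(\epsilon^{r+k})$. In particular $\widetilde{f}_{A'}$ lands in the reducible locus $\mathcal{R}\subset\mathcal{A}^{r+k}$ of gauge classes of the form $[B\oplus d]$, which is canonically $\mathcal{A}^{r}$, and there it is simply $s\mapsto[\hat{f}_s^{\,*}A']$. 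We may also take $f$ to be a fixed smooth representative of the generator of $\pi_{m-2}(\Omega^2 S^m)\cong\pi_m(S^m)$ — if $\widetilde{cl}_{E\oplus\epsilon^k}$ vanishes on $[f]$ there is nothing to prove, $\zeta_{YM}$ being then $0$ — so that $\hat{f}$ is the standard collapse $S^n\times S^2\to S^n\wedge S^2=S^m$, whose restriction $\hat{f}_s$ to $\{s\}\times S^2$ is, for every $s\neq *$, a smooth embedding $S^2\hookrightarrow S^m$.

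I would then pass to Sobolev completions, so that $\mathcal{A}(E)$ and $\mathcal{A}^{r}\cong\mathcal{R}$ become Banach manifolds and the cascade--perturbed stable manifolds of $YM$ on $\mathcal{A}^{r}$ form a locally finite stratification by finite-codimension Banach submanifolds (the framework of \cite{MorseYangMills}, \cite{cascades}), and study
\[
\Phi\colon \mathcal{A}(E)\times S^n\longrightarrow \mathcal{A}^{r},\qquad \Phi(A',s)=[\,\hat{f}_s^{\,*}A'\,].
\]
For fixed $s\neq *$, varying $A'$ in the direction $\alpha\in\Omega^1(S^m,\mathrm{ad}\,E)$ moves $\Phi$ by $[\hat{f}_s^{\,*}\alpha]$, and since $\hat{f}_s$ is an embedding, $\hat{f}_s^{\,*}$ is surjective on $\mathrm{ad}\,E$-valued $1$-forms (extend a form off the embedded $2$-sphere through a tubular neighbourhood, then cut off); so this partial derivative is already onto $T_{\Phi(A',s)}\mathcal{A}^{r}$, and surjectivity survives the gauge quotient on the target. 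Hence $\Phi$ is transverse to every cascade--perturbed stable stratum at every $(A',s)$ with $s\neq *$ (at $s=*$ one has $\Phi(A',*)=[\text{trivial connection}]$, which lies only in the open, dense stratum of the trivial holomorphic type, so there is nothing to check). Sard--Smale then produces a residual set of connections $A'$ on $E$, arbitrarily close to the given $A$ in the chosen Sobolev norm, for which $\widetilde{f}_{A'}=\Phi(A',\cdot)$ is transverse to all the cascade--perturbed stable strata. Because the Yang--Mills flow preserves $\mathcal{R}$ and restricts there to the Yang--Mills flow of $\mathcal{A}^{r}$ (for a reducible connection $B\oplus d$ the gradient splits), the reparametrised flow then carries $\widetilde{f}_{A'}$ into the relevant skeleton, exactly as an abstract perturbation of $\widetilde{f}$ would. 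Since the sole use of an abstract perturbation is to secure this transversality, this is the sense in which any abstract perturbation of $\widetilde{f}$ is realised by a perturbation of $A$.

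The step I expect to be the main obstacle is the surjectivity of $\hat{f}_s^{\,*}$ on $1$-forms — equivalently, having the connection parameter act richly enough on $\widetilde{f}$. This is the one place the argument genuinely uses $X=S^m$ rather than an arbitrary simply connected $X$: only for a sphere can the representative $f$ be chosen so that every slice $\hat{f}_s$ is an embedded $2$-sphere. For a general $X$ the classes in $\pi_n(\Omega^2 X)$ need not admit representatives whose slices $\hat{f}_s\colon S^2\to X$ are embeddings, or even immersions of controlled type, $\hat{f}_s^{\,*}$ may fail to be surjective, and the transversality of $\Phi$ can break down — which is exactly why this perturbation property, and with it Conjecture \ref{prop.invariance}, is claimed here only in the spherical case. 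A secondary, routine point is the infinite-dimensional bookkeeping for Sard--Smale (Fredholmness of the restriction of $\Phi$ to each stratum, and well-behavedness of the stratification), which goes through as in \cite{MorseYangMills} but, as the paper remarks elsewhere, ``requires a bit of care.''
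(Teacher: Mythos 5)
Your proposal takes a genuinely different route from the paper's, and it has a gap that matters precisely where the theorem is interesting. The paper proves the lemma literally: given an arbitrary abstract perturbation as a homotopy $\widetilde{F}: S^n \times [0,1] \to \mathcal{A}^{r+k}$, it builds from $\widetilde{F}$ a bundle of connection spaces over $S^n\times[0,1]$, uses the homotopy lifting property to produce a smooth family of connections over the fibers of the associated rank $r+k$ vector bundle over $S^n\times S^2$, extends it to a connection $A''$ trivial along the collapsed slices, and pushes $A''$ forward along the adjoint map $g: S^n\times S^2\to S^m$ (an embedding away from those slices — the one place $X=S^m$ is used). Crucially, this produces a \emph{general} rank $r+k$ connection; it need not split as $A'\oplus d$. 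The lemma's ``connection $A$ on $E$'' is a notational slip for a connection on the rank $r+k$ bundle, since the pushed-forward $A''$ in the paper's own proof has rank $r+k$.

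You, by contrast, fix the trivial connection $d$ on $\epsilon^k$ and vary only the $E$-summand, which confines $\widetilde{f}_{A'}$ to the reducible locus $\mathcal{R}\cong\mathcal{A}^r$. Your parametric transversality argument — which is clean and does correctly identify the surjectivity of $\hat f_s^{\,*}$ on $1$-forms as the role of $X=S^m$ — therefore only yields transversality to the cascade strata of $\mathcal{A}^r$, hence a homotopy of $\widetilde{f}_{A'}$ into the $n$-skeleton \emph{of} $\mathcal{A}^r$. When $n>2r-2$ (which is exactly the regime in which the stabilization $k>0$ in Theorem \ref{thm.instability} is needed, since otherwise $k=0$ would already do), the $n$-skeleton of $\mathcal{A}^r$ contains critical points with Birkhoff--Grothendieck weights outside $\{-1,0,1\}$, so the conclusion $|f(s)|_A\le r+k$ does not follow. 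The partial derivative in the connection direction that you compute is onto $T\mathcal{R}$, not onto $T\mathcal{A}^{r+k}$, so the Sard--Smale step cannot give transversality to the stable strata of $\mathcal{A}^{r+k}$ — and restricting those strata to $\mathcal{R}$ gives submanifolds of too small a codimension, since the $\mathcal{A}^{r}$-Morse index of a reducible critical point is strictly smaller than its $\mathcal{A}^{r+k}$-Morse index. To repair the argument you would need to let the connection parameter range over all unitary connections on $E\oplus\epsilon^k$, not only those of the form $A'\oplus d$; with that change your Sard--Smale framing is a viable (and arguably cleaner, since it dispenses with the homotopy lifting step) substitute for the paper's construction, and also fills in the transversality claim that the paper refers to only as ``general differential topology considerations.'' Note finally that you prove a weaker statement than the lemma — existence of \emph{some} connection perturbation achieving the needed transversality, rather than realizability of \emph{every} abstract perturbation — which is fine for the application but worth flagging as a deliberate reformulation.
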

 If we accept this for the moment then the first part of the theorem
 follows, as  the Birkhoff-Grothendieck splitting type of elements in the $
 2(r+k)-2$ skeleton, by preliminary discussion above, is such that all weights are either $1,-1 $ or $0 $. Consequently,  (after
 perturbation of $A$) $ |f (s)| _{A} \leq r + k$ for each $s \in S ^{n} $.
 \begin{proof} [Proof of Lemma \ref{lemma.perturb}]  Let us then verify the
 claim about the perturbation. This is where the assumption $X= S ^{n} $ comes into the proof.    Take a representative $f: S
 ^{n} \to \Omega ^{2} S ^{m} $, for $ [f] $ so  that the induced map $g: S ^{n} \times S ^{2} \to S ^{m}$ is an embedding
 outside  the submanifolds $S ^{n} \times \{0\}$, $ \{0\} \times S ^{2} $ which
 are collapsed to a point. 
Given an abstract perturbation $ \widetilde{f}'
 $ of $ \widetilde{f} $, with $ \widetilde{F}: S ^{n} \times I \to \mathcal {A}
 ^{r+k} $ a homotopy between $ \widetilde{f}$, and $ \widetilde{f}'$, we get an
 associated structure group $ \mathcal {G}_0 ( \epsilon ^{r+k}) $ bundle $$\pi
 _{ \widetilde{F}} :P _{ \widetilde{F}} \to S ^{n} \times [0,1]$$ with fiber $
 \mathbb{C} ^{r+k} \times S ^{2} $. We also get a bundle $G _{ \widetilde{F}}
 \to S ^{n} \times [0,1] $, with fiber over $ (s,t)$ the space of connections on $\pi _{ \widetilde{F}} ^{-1} (s,t)$, in the
 gauge equivalence class of $ \widetilde{F} (s,t) $. Note that this is in
 general not isomorphic to the  principal $ \mathcal {G}_0 ( \epsilon ^{r+k})
 $ bundle pulled back by $ \widetilde{F} $. 
 By construction $G _{
 \widetilde{F}} $ has a section over $S ^{n} \times \{0\} $, use homotopy
 lifting property to obtain a section $ \mathcal {S} _{ \widetilde{f}'} $ over $
 S ^{n} \times \{1\} $.  $ \mathcal {S} _{ \widetilde{f}'}$ can be thought of as smooth family
 of connections $ \{A _{s}\} $  over $\{ \{s\}
 \times S ^{2} \subset S ^{n} \times S ^{2} \}$, for the $ \mathbb{C} ^{r+k} $
 bundle $E _{ \widetilde{f}'} \to S ^{n} \times S ^{2}$ canonically associated
 to $P _{ \widetilde{F}} $. Note that $ E _{ \widetilde{f}'} $ is canonically
 trivial over  $S ^{n} \times
\{0\} $, $ \{0\} \times S ^{2}$, consequently we may extend $ \{A _{s}\} $ to a
connection ${A}''$ on $E _{ \widetilde{f}'} \to S ^{n} \times S ^{2}$, trivial
over $S ^{n} \times
\{0\} $, $ \{0\} \times S ^{2}$. (The extension may be obtained via classical
partition of unity argument.) It follows that we may push forward $A'' $ via $g
$ to a connection $A' $ on $E $, with exactly the right property.
\end{proof}

%

We now verify the second part of the theorem. We will say that a skeleton of $
\mathcal {A} ^{r} $ (in the previous sense) has energy $l$, if all the
generators of $ \mathcal {C}_* (YM) $ in it have $YM$ energy less then or equal
to $l$. As $ \mathcal {C}_* (YM) $ is perfect, skeleta with distinct energy have distinct homology.
  From this it follows that a skeleton $ \mathcal {S} $ with energy $l$ admits a
  non null-homotopic based map $f: S ^{m _{l}-2} \to \mathcal {S} $, which
  cannot be pushed below the energy level $l$. As otherwise $ \mathcal {S} $ could be
  compressed into a lower energy skeleton, which would contradict the 
  above. (The compression argument is just Whitehead's
 theorem, see for example \cite{Hatcher}). 
As the map $W _{A, const}$ is a homotopy equivalence for every $A$, we may find
a $g: S ^{m _{l}-2} \to \Omega ^{2} BU (r)$, s.t. $W _{{A, const} _{*}} [g] =
[f]$. Following the proof of Lemma \ref{lemma.perturb} we get a family $ \{A'
_{s}\} $ of connections on $\{E'_{f}| _{ \{s\} \times S ^{2}} \}$ where $E' _{f}
$ is the $ \mathbb{C} ^{r} $ vector bundle over $ S ^{m _{l}-2} \times S ^{2}
$ associated to $f$ as in the proof of \ref{lemma.perturb}. Extend $ \{A'_s\} $
to a connection $ A' _{f}$ on $E _{f} $, trivial over $S ^{m_l-2} \times \{0\}
$, where the bundle is canonically trivialized. By construction $A' _{f} $ is also
the trivial connection over $ \{0\} \times S ^{2} $. It follows that $E' _{f}$
induces a vector bundle over 
$$S ^{m_l} = S ^{m_l-2} \bigvee S
^{2},$$
and $A' _{f} $ induces a connection $A _{f} $ on $E _{f} $. By construction the infinum in $ \zeta _{YM} (E _{f}, m
_{l})$ is attained on $A _{f}$. 
%
%
%

The proof is finished as the normalization for the bi-invariant metric on $SU
(r) $ that we take, is such that an $S ^{1} $ geodesic with weights $ \{d_i\} $,
has energy given by \eqref{eq.energy}.
\end{proof}
\bibliographystyle{siam}   

\end{document}